\numberwithin{equation}{section}
\theoremstyle{definition}
\newtheorem{definition}{Definition}[section]
\theoremstyle{remark}
\newtheorem{remark}[definition]{Remark}
\theoremstyle{plain}
\newtheorem{theorem}[definition]{Theorem}
\newtheorem{lemma}[definition]{Lemma}
\newtheorem{proposition}[definition]{Proposition}
\newtheorem{corollary}[definition]{Corollary}
\begin{document}

\title{Relative Symplectic Caps, 4-Genus and Fibered Knots}

\author{Siddhartha Gadgil and Dheeraj Kulkarni}
\address{Department of Mathematics, Indian Institute of Science, Bangalore, India 560012.\\
School of Mathematics, Georgia Institute Of Technology, Atlanta, US. }
\email{gadgil@math.iisc.ernet.in, kulkarni@math.gatech.edu}

\thanks{}

\keywords{Relative Symplectic Caps, 4-genus and fibered knots}

\begin{abstract}
We prove relative versions of the symplectic capping theorem and sufficiency of Giroux's criterion for Stein fillability and use these to study the $4$-genus of knots. 

More precisely, suppose we have a symplectic 4-manifold $X$ with convex boundary and a symplectic surface $\Sigma$ in $X$ such that $\partial \Sigma $ is a transverse knot in $\partial X $. In this paper, we prove that there is a closed symplectic 4-manifold $Y$ with a closed symplectic surface $S$ such that $(X, \Sigma) $ embeds into $(Y, S) $ symplectically. As a consequence we obtain a relative version of the Symplectic Thom conjecture. 

We also prove a relative version of the sufficiency part of Giroux's criterion for Stein fillability, namely, we show that a fibered knot whose mondoromy is a product of positive Dehn twists bounds a symplectic surface in a Stein filling. We use this to study 4-genus of fibered knots in $\mathbb{S}^3 $. Further, we give a criterion for quasipostive fibered knots to be strongly quasipositive.  
\end{abstract} 
\maketitle

\section{Introduction}\label{S:intro}

By a theorem of Eliashberg~\cite{E3} and Etnyre \cite{Et3}, any compact symplectic $4$-manifold with convex boundary embeds in a closed symplectic $4$-manifold. This has many applications, in particular it played an important role in Kronheimer-Mrowka's proof of property~P (in fact of the stronger result that the fundamental group of the $3$-manifold obtained by $\pm1$-surgery about a non-trivial knot in $S^3$ has a non-abelian $SU(2)$ representation). Our first main result is a relative version of this result for a compact symplectic $4$-manifold containing a symplectic surface. 

\begin{theorem}[Relative Symplectic Capping]\label{cap} Let $(X, \omega)$ be a symplectic 4-manifold with convex boundary. Let $\Sigma $ be a symplectic surface in $X$ such that $\partial \Sigma$ is a transverse link contained in $\partial X $. Then there exists a closed symplectic 4-manifold $(Y, \omega') $ and a closed symplectic surface $S\subset Y$ such that $(X, \Sigma) $ embeds symplectically into $(Y, S) $. 
\end{theorem}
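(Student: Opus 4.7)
The plan is to refine the Eliashberg--Etnyre symplectic capping construction so as to simultaneously extend the symplectic surface $\Sigma$. Write $(M, \xi)$ for the contact boundary of $(X, \omega)$ induced by convexity, and $L = \partial \Sigma \subset M$ for the transverse link.

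First I would fix an open book decomposition $(B, \pi)$ of $M$ supporting $\xi$ with respect to which $L$ is braided, that is, positively transverse to every page. The existence of such an open book is the transverse Alexander theorem in contact topology (Pavelescu). After a Darboux-type normalisation in a collar $[0, \epsilon) \times M \hookrightarrow X$ one may arrange the symplectic form to take the standard model $d(e^t \alpha)$ with $\alpha$ a contact form adapted to $(B, \pi)$, and simultaneously arrange $\Sigma \cap ([0,\epsilon) \times M) = [0, \epsilon) \times L$. The data on $\partial X$ is now in a standard position with respect to $(B, \pi)$, with $\Sigma$ meeting $\partial X$ along a braid.

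Second, I would build the cap $(W, \omega_W)$ as in Etnyre's proof. After positively stabilising $(B, \pi)$ sufficiently many times, the stabilised open book arises as the boundary of a positive allowable Lefschetz fibration $f \colon W \to D^2$; orientation-reversing this Lefschetz fibration provides a symplectic cap for $X$. Positive stabilisations of $(B, \pi)$ induce positive Markov stabilisations of the braid $L$, so $L$ remains braided about the stabilised open book, and the collar normalisation of the first step propagates through the stabilisation.

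Third, I would extend $L$ to a symplectic multi-section $S_W$ of $f$ inside $W$. Each strand of $L$ caps off by an embedded disc sitting in a single page of $f$, and piecing these discs together across $D^2$ (using the Lefschetz-fibration structure) yields a topological multi-section. Since the pages of $f$ are Stein, one can choose the discs so that the multi-section has $J$-positive tangent planes for a compatible almost complex structure; a Gompf--Donaldson adjustment of the symplectic form in a neighbourhood of the multi-section then makes $S_W$ genuinely symplectic. The union $S = \Sigma \cup S_W \subset Y = X \cup W$ is a closed symplectic surface in the closed symplectic manifold $Y$, as required.

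The main obstacle is producing $S_W$ as a genuinely symplectic surface, rather than merely a topological extension of $L$. Near the binding of the open book and near the singular fibres of $f$ the naive multi-section built from disc caps in the pages need not have $\omega_W$-positive tangent planes, so one must construct explicit local models that ensure positivity of intersection of the multi-section with the binding circles and the vanishing cycles of $f$. Equally, the matching between $S_W$ and $\Sigma$ along $L$ must be verified to be $C^1$ and symplectic, which the collar normalisation of the first step is designed to provide. Once these local compatibilities are arranged, a Gompf-type perturbation of the Lefschetz fibration symplectic form on $W$, supported away from $X$, produces the closed symplectic surface $S$ and the closed symplectic manifold $(Y, \omega')$.
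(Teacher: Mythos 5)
Your proposal takes a genuinely different route from the paper. The paper never touches open books or Lefschetz fibrations in the cap construction: it embeds $\Sigma$ into a closed Riemann surface $S$ \emph{first}, takes a convex disc bundle $E$ over $S$, and defines the relative cap as $E''\setminus W'$ where $W'$ is a Stein sublevel set cut out by a carefully designed plurisubharmonic function $\psi(z_1,z_2)=\phi(z_1)+|z_2|^2$. The closed surface $S$ is therefore present from the very start and the extension of $\Sigma$ is automatic; all the work goes into shaping the boundary of the cap so that it glues to $(X,\Sigma)$ along a standard contact neighbourhood of $K$, with the symplectic dilation transverse to the two boundary pieces in the right directions. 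The closed capping theorem of Eliashberg--Etnyre is then applied only once, to the result of this gluing, as a black box. Your approach instead wants to braid $L$ about an open book for $\partial X$, build an absolute Lefschetz-fibration cap, and only afterwards produce the surface extension as a symplectic multi-section. This places all the difficulty in the last step, which is exactly where the proposal has a real gap.

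Concretely, step 3 is not a construction. The sentence ``each strand of $L$ caps off by an embedded disc sitting in a single page of $f$'' is not well-posed: a strand of the braid is a curve transverse to all the pages, it does not lie in any page, and each component $L_i$ is an $n_i$-braid that must be filled by a surface meeting every regular fibre of $f$ in $n_i$ points. Whether such a (possibly branched) multi-section exists at all is a nontrivial topological question — a multi-section with prescribed boundary braid need not exist, since the braid may not be compatible with the monodromy data of the Lefschetz fibration, and even when it exists it may have to be singular or of positive genus. You flag the passage from a topological multi-section to a symplectic one as the main obstacle and propose local models near the binding and vanishing cycles plus a Gompf--Donaldson perturbation, but that machinery only upgrades a surface that is already positively transverse to the fibres with controlled intersections at the singular fibres; it does not supply the multi-section to begin with, and no argument is given for the required positivity. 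Relatedly, ``orientation-reversing this Lefschetz fibration provides a symplectic cap'' is not literally correct, since reversing the orientation of a symplectic $4$-manifold does not yield a symplectic manifold; the actual concave filling requires first capping the binding to reach a surface bundle and then closing with a Lefschetz fibration whose monodromy factors into \emph{positive} twists representing the inverse word, and your braid must be tracked across both pieces. Until the multi-section is produced — topologically, with embedding, with the right positivity, and matching $\Sigma$ in $C^1$ along $L$ — the argument does not go through. The paper's disc-bundle construction is precisely designed to avoid having to solve this problem.
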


This is motivated by the so called \emph{Symplectic Thom Conjecture}, which is a theorem~\cite{OS} saying that symplectic surfaces minimise genus in their respective homology classes. As an immediate corollary to the above result, using the Symplectic Thom Conjecture we obtain the following result. This has been proved previously using indirect arguments.

\begin{theorem}[Relative Version of Symplectic Thom Conjecture]\label{relThom} Let $(X, \omega) $ and $\Sigma $ be as above. Then $\Sigma $ is genus-minimizing in its relative homology class.
\end{theorem}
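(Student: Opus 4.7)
The plan is to reduce the relative statement to the absolute Symplectic Thom Conjecture by closing everything up with Theorem~\ref{cap}. Apply Theorem~\ref{cap} to $(X,\omega,\Sigma)$ to obtain a closed symplectic 4-manifold $(Y,\omega')$ and a closed symplectic surface $S\subset Y$ into which $(X,\Sigma)$ embeds symplectically. Let $C := S \cap \overline{Y \setminus X}$ denote the complementary cap: a smooth surface sitting in the capping region with $\partial C = \partial\Sigma$, so that $S = \Sigma \cup_{\partial\Sigma} C$.

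Given any smoothly embedded surface $\Sigma' \subset X$ with $\partial \Sigma' = \partial \Sigma$ and $[\Sigma'] = [\Sigma]$ in $H_2(X,\partial X)$, I would glue in the cap to form the closed surface $S' := \Sigma' \cup_{\partial\Sigma} C \subset Y$. Granting that $[S'] = [S]$ in $H_2(Y;\mathbb{Z})$, the absolute Symplectic Thom Conjecture applied to the symplectic surface $S$ gives $g(S) \le g(S')$. On the other hand, since $S$ and $S'$ share the common piece $C$, a direct Euler-characteristic count yields
\begin{equation*}
\chi(S) - \chi(S') \;=\; \chi(\Sigma) - \chi(\Sigma'),
\end{equation*}
which under the usual connectedness conventions translates into $g(S') - g(S) = g(\Sigma') - g(\Sigma)$. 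Combining with the Thom inequality produces $g(\Sigma) \le g(\Sigma')$, as desired.

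The only substantive step is the homological identity $[S'] = [S]$ in $H_2(Y;\mathbb{Z})$. By construction their difference is represented by the cycle $\Sigma' - \Sigma$ supported in $X$. The relative-homology hypothesis together with the long exact sequence of the pair $(X,\partial X)$ places $[\Sigma' - \Sigma]$ in the image of $H_2(\partial X) \to H_2(X)$; one must show its further image in $H_2(Y)$ vanishes. I would verify this by a Mayer--Vietoris argument applied to the decomposition $Y = X \cup_{\partial X} \overline{Y\setminus X}$, showing that the relevant cycles of $\partial X$ bound $3$-chains in the capping region---possibly after enlarging the cap, a freedom permitted by Theorem~\ref{cap} (for instance by performing symplectic blow-ups or additional symplectic handle attachments away from $S$). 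This homological bookkeeping is the main obstacle; once it is handled, the rest of the argument is formal.
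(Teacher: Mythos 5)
Your overall strategy coincides with the paper's: apply Theorem~\ref{cap} to cap $(X,\Sigma)$ to a closed pair $(Y,S)$, then invoke the absolute Symplectic Thom Conjecture (Theorem~\ref{STC}) and translate the genus inequality back via the shared cap. The Euler-characteristic bookkeeping relating $g(S)-g(S')$ to $g(\Sigma)-g(\Sigma')$ is correct under the usual connectedness conventions.

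You are right, however, to isolate the homological identity $[S']=[S]$ in $H_2(Y;\mathbb{Z})$ as the one substantive point, and this is a genuine issue that your proposal does not actually close. From $[\Sigma']=[\Sigma]$ in $H_2(X,\partial X)$ one only gets that $[\Sigma'-\Sigma]\in H_2(X)$ lies in the image of $H_2(\partial X)\to H_2(X)$; its further image in $H_2(Y)$ need not vanish. Your Mayer--Vietoris sketch correctly locates the obstruction (one needs the relevant class of $H_2(\partial X)$ to die in $H_2(\overline{Y\setminus X})$), but ``enlarging the cap by blow-ups or handle attachments away from $S$'' is not an argument: blow-ups create new $H_2$ rather than killing old $H_2$, killing an $H_2$ class generally requires attaching a $3$-handle along an embedded sphere in that class, and there is no reason the relevant class is spherical. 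So as written the key step is asserted, not proved.

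For what it is worth, the paper's own proof of Theorem~\ref{relThom} is a single sentence that does not engage with this point at all; it simply caps and cites Theorem~\ref{STC}. The paper's introduction even advertises that, unlike Bowden's argument, no hypothesis $H_2(X)=0$ is needed --- which is precisely the hypothesis under which the homological step you flagged becomes trivial. So you have identified a real gap that is present in the paper as well; your version is more honest about it, but it is not filled. A complete proof would need either to show that the specific peripheral class $[\Sigma'-\Sigma]$ dies in $H_2(Y)$ for a suitably chosen cap, or to bypass homological equality altogether (e.g.\ by an adjunction-type argument that only uses equality of the relevant pairings, which you would then have to establish for classes differing by a peripheral class).
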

A relative version of Symplectic Thom conjecture is also proved by Bowden (see section 7.3 in \cite{JB}) under the assumption that $H_2(X)=0 $. Bowden uses Gay's symplectic 2-handles (see \cite{Gay}) to cap off the symplectic surface by adding \textit{only} discs. In this paper, we prove the relative version of symplectic Thom Conjecture without any additional hypothesis on $X$, in particular without the hypothesis that $H_2 (X)=0 $.

A relative version of the Symplectic Thom Conjecture can be established using Mrowka-Rollin's generalization (\cite{MR}) of Bennequinn inequality for Legendrian knots. 
The proof of relative version of Symplectic Thom conjecture in this article is based on the most intuitive and simple geometric idea that we can construct relative symplectic caps to cap off a symplectic subsurface $S$ first and then the ambient manifold $X$.

We shall apply Theorem~\ref{relThom} in particular to study the $4$-genus of a link in $S^3$ by proving a relative version of Giroux's sufficient condition for bounding a Stein manifold (see Theorem \ref{RelGiroux} below). First observe that if a link $K$ in $\partial\mathbb{B}^4 $ bounds a  symplectic surface $\Sigma $ in $\mathbb{B}^4 $, then, by Theorem~\ref{relThom},  $\Sigma $ realizes the $4$-genus of the link.
 
\begin{corollary} \label{S3Cap}
Let $L$ be transverse link in $(\mathbb{S}^3, \xi_{st}) $. Let $\Sigma$ denote a symplectic surface in $(\mathbb{B}^4, \omega_{st}) $ such that $\Sigma \cap \mathbb{S}^3= \partial \Sigma= L $. Then there exists a closed symplectic 4-manifold $(X, \omega)$ with a closed symplectic surface $S$ such that $(\mathbb{B}^4, \Sigma)$ can be embedded symplectically into $(X, S)$.  
\end{corollary}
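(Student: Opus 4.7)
The plan is to obtain Corollary \ref{S3Cap} as an immediate specialization of Theorem \ref{cap} applied to the pair $(\mathbb{B}^4,\omega_{st})$ with symplectic surface $\Sigma$. What needs checking is only that the hypotheses of the Relative Symplectic Capping Theorem are satisfied in this standard case; no new geometric construction is required beyond invoking the theorem.

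First, I would verify that $(\mathbb{B}^4,\omega_{st})$ has convex boundary with induced contact structure exactly $\xi_{st}$ on $\mathbb{S}^3$. This is classical: on $\mathbb{R}^4$ with $\omega_{st}=\sum dx_i\wedge dy_i$, the radial vector field $\tfrac{1}{2}\sum(x_i\partial_{x_i}+y_i\partial_{y_i})$ is a Liouville vector field that points strictly outward along $\mathbb{S}^3=\partial\mathbb{B}^4$, and its contraction with $\omega_{st}$ restricts to a contact form on $\mathbb{S}^3$ whose kernel is the standard tight structure $\xi_{st}$. Hence $(\mathbb{B}^4,\omega_{st})$ is a symplectic $4$-manifold with convex boundary in the sense required by Theorem \ref{cap}.

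Next, the remaining hypotheses of Theorem \ref{cap} match the data in the corollary verbatim: $L$ is given to be a transverse link in the contact boundary $(\mathbb{S}^3,\xi_{st})$, and $\Sigma\subset\mathbb{B}^4$ is a symplectic surface satisfying $\Sigma\cap\mathbb{S}^3=\partial\Sigma=L$, so $\partial\Sigma$ is a transverse link contained in the convex boundary of the ambient symplectic $4$-manifold.

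Applying Theorem \ref{cap} to the pair $(\mathbb{B}^4,\Sigma)$ then directly produces a closed symplectic $4$-manifold $(X,\omega)$ and a closed symplectic surface $S\subset X$ into which $(\mathbb{B}^4,\Sigma)$ embeds symplectically, which is the conclusion of the corollary. There is no genuine obstacle here; the point of stating Corollary \ref{S3Cap} separately is to package the $\mathbb{S}^3$ case in a form convenient for the subsequent applications to the $4$-genus of transverse links, where combining this embedding with the Symplectic Thom Conjecture (Theorem \ref{relThom}) yields genus-minimality of $\Sigma$ in $\mathbb{B}^4$.
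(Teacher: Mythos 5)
Your proposal is correct and matches the paper's own proof essentially verbatim: the paper likewise observes that the radial vector field on $(\mathbb{B}^4,\omega_{st})$ is a symplectic dilation pointing outward, so that $\partial\mathbb{B}^4=\mathbb{S}^3$ is convex with induced contact structure $\xi_{st}$, and then applies Theorem~\ref{cap} directly to the pair $(\mathbb{B}^4,\Sigma)$. Your extra line identifying the Liouville field and its contraction with $\omega_{st}$ is just the same step spelled out in slightly more detail.
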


A link $L$ in $\mathbb{S}^3 $ is called \emph{fibered} if $\mathbb{S}^3 -L$ fibres over $\mathbb{S}^1 $. Fibered links give rise to \emph{open book decompositions} of $\mathbb{S}^3$. A fundamental result of Giroux gives a one to one correspondence between contact structures (up to isotopy) and open book decompositions up to positive stabilizations. Thus, we can study fibered links using contact topology. 

There is a dichotomy among contact structures: a contact structure is either \emph{tight} or \emph{overtwisted}. Therefore, we can classify fibered links into two classes, namely tight and overtwisted.
A fibered link $L$ is called tight (overtwisted) if corresponding open book for $ \mathbb{S}^3 $ supports tight (overtwisted) contact structure on $\mathbb{S}^3$. Note that there is a unique, up to isotopy, tight contact structure on $\mathbb{S}^3 $. A characterization of tight links appears in \cite{H1} (See proposition 2.1 in \cite{H1}). Namely, a fibered link is tight if and only if it is strongly quasipositive. 

If a link in $\mathbb{S}^3 $ bounds a complex curve in $\mathbb{B}^4 $ then there are strong topological implications on the link. Links which arise as transverse intersection of a complex curve in $\mathbb{C}^2 $  with $\mathbb{S}^3 $ are called as transverse $\mathbb{C} $-links. Rudolf showed that quasipositive links are transverse $\mathbb{C} $-link (see \cite{R2}). The converse was shown by Boileau and Orevkov (see \cite{B1}). Plamenevskaya showed that for quasipositive knots concordance invariant and smooth 4-genus are equal (see \cite{P1}).

In a similar spirit, we may ask whether a link in $\mathbb{S}^3 $ is isotopic to the boundary of a symplectic surface in $\mathbb{B}^4 $ endowed with standard symplectic structure $ \omega_{st}$. From the above discussion we know that quasipositive links in $\mathbb{S}^3 $ do bound symplectic surfaces in $\mathbb{B}^4 $. We shall show that fibered links with monodromy a product of positive Dehn twists bound symplectic surfaces, and these have genus equal to genus the fiber of the fibration. Before stating this result, we fix some notation, which we use throughout this paper.
 
Let $L$ be a fibered link in $\mathbb{S}^3$. Let $(\mathbb{S}^3,K)$ be the open book decomposition given by it. Let $\Sigma$ denote the page of the open book and $\phi$ denote the monodromy of the open book. For a curve $\gamma$ on $\Sigma$, we denote the positive (right handed) Dehn twist along $\gamma$ by $D_{\gamma}$. 
 Let $\xi_{st}$ denote the unique (up to isotopy) tight contact structure on $\mathbb{S}^3 $ and $ \omega_{st}$ denote the standard symplectic form on $ \mathbb{B}^4  \subset \mathbb{R}^4$. Let $g(K)$ denote the Seifert genus and $g_4(K)$ denote the 4-genus of $K$. 

The following result is a refinement of a result of Giroux giving a sufficient condition for bounding a Stein Manifold.

\begin{theorem}\label{RelGiroux}
Let $L$ be a fibered link in $\mathbb{S}^3$. If the corresponding open book $( \Sigma,\phi) $ has monodromy satisfying the following  
$$ \phi = D_{\gamma_1}\circ D_{\gamma_2} \circ ... \circ D_{\gamma_k} $$ where $\gamma_i$'s are nonseparating curves on $\Sigma$ then $L$ bounds a symplectic surface in $(\mathbb{B}^4,\omega_{st})$ which is homeomorphic to $\Sigma$. 
\end{theorem}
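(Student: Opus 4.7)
The plan is to realise the open book $(\Sigma,\phi)$ as the boundary of a Stein Lefschetz fibration $\pi\colon W\to\mathbb{D}^2$ with regular fibre $\Sigma$, in such a way that a generic fibre is already a symplectic surface with boundary $L$, and then to identify $W$ symplectically with $(\mathbb{B}^4,\omega_{st})$. Concretely, I would first construct $W$ following the Loi--Piergallini / Akbulut--Ozbagci recipe: begin with $\Sigma\times\mathbb{D}^2$, which is a Stein domain viewed as a trivial Lefschetz fibration, and for each factor $D_{\gamma_i}$ in $\phi$ attach a Weinstein $2$-handle along a Legendrian realisation of $\gamma_i$ sitting in a distinct page, with framing one less than the page-framing. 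The resulting 4-manifold $W$ is Stein and is simultaneously the total space of a Lefschetz fibration with vanishing cycles $\gamma_1,\dots,\gamma_k$ in the prescribed order, so that the induced open book on $\partial W$ is precisely $(\Sigma,\phi)$; in particular $\partial W\cong\mathbb{S}^3$ with binding $L$. Regular fibres are holomorphic for the Stein structure, so each generic fibre $\Sigma_0\subset W$ is a symplectic surface and its boundary is a transverse representative of $L$.

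Next I would identify $W$ with the standard ball. Since $\phi$ is a product of positive Dehn twists, the contact structure supported by $(\Sigma,\phi)$ on $\partial W$ is Stein fillable, hence tight, so by Eliashberg's classification of tight contact structures on $\mathbb{S}^3$ it must be $\xi_{st}$. The Gromov--Eliashberg--McDuff theorem on minimal symplectic fillings of $(\mathbb{S}^3,\xi_{st})$ (Stein fillings being automatically minimal) then provides a symplectomorphism $\Phi\colon W\to(\mathbb{B}^4,\omega_{st})$. The image $\Phi(\Sigma_0)$ is a symplectic surface in $\mathbb{B}^4$ homeomorphic to $\Sigma$, and its boundary is a transverse link in $(\mathbb{S}^3,\xi_{st})$ smoothly isotopic to $L$, both being bindings of open books with page $\Sigma$ and monodromy $\phi$; a small ambient smooth isotopy of $\mathbb{B}^4$ then produces the required symplectic surface bounding $L$.

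The hardest step is likely the first: arranging the handle attachments so that the Stein form, the Lefschetz fibration, and the boundary open book are all genuinely compatible, and so that a generic fibre is really a symplectic copy of $\Sigma$ whose boundary lies on the binding. The nonseparating hypothesis on the $\gamma_i$ enters precisely here, ensuring that the vanishing cycles are \emph{allowable} and that the Loi--Piergallini / Akbulut--Ozbagci Stein/Weinstein handle model applies without modification. A secondary, more routine, point is the appeal to uniqueness of symplectic fillings of $(\mathbb{S}^3,\xi_{st})$ at the symplectic (rather than merely smooth) level, which is standard in the $b_2=0$ regime used here.
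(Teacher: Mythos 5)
Your proof is correct and takes essentially the same route as the paper's: start with the Stein domain obtained by smoothing the corners of $\Sigma\times\mathbb{D}^2$ (which supports the open book $(\Sigma,\mathrm{id})$ and contains the holomorphic fiber $\Sigma\times\{0\}$ bounding $L$), attach Weinstein $2$-handles along Legendrian realisations (via LeRP, using that the $\gamma_i$ are nonseparating) with framing one less than the page framing so that the boundary open book becomes $(\Sigma,\phi)$, and invoke uniqueness of Stein fillings of $(\mathbb{S}^3,\xi_{st})$ to carry the fiber to a symplectic surface in $(\mathbb{B}^4,\omega_{st})$. The Lefschetz-fibration packaging and the appeal to Loi--Piergallini / Akbulut--Ozbagci are cosmetic variants of what the paper does by hand with the plurisubharmonic sublevel set $W_0=\{\psi(z_1)+|z_2|^2\le 1\}$ and the lemma on how Legendrian surgery changes the supported open book.
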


As a consequence, we obtain the following corollaries.

\begin{corollary} \label{4G3G}
Let $L$ and $(\Sigma, \phi) $ be  as given in Theorem~\ref{RelGiroux}. Then  $g_4(L)= g(\Sigma) $, in other words, 4-genus of $L$ equals the genus of the page of open book $(\Sigma, \phi)$. Therefore, $g_4(L)=g(L) $.
\end{corollary}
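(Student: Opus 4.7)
The plan is to combine Theorem~\ref{RelGiroux} with the Relative Symplectic Thom Conjecture (Theorem~\ref{relThom}). By Theorem~\ref{RelGiroux}, $L$ bounds a symplectic surface $\Sigma' \subset (\mathbb{B}^4, \omega_{st})$ homeomorphic to $\Sigma$, and in particular $g(\Sigma') = g(\Sigma)$. Clearly $g_4(L) \leq g(\Sigma')$, since $\Sigma'$ is a smooth surface in $\mathbb{B}^4$ with boundary $L$. For the reverse inequality, apply Theorem~\ref{relThom} to the pair $(\mathbb{B}^4, \Sigma')$: the surface $\Sigma'$ is genus-minimizing among all smoothly embedded oriented surfaces in $\mathbb{B}^4$ that share its relative homology class in $H_2(\mathbb{B}^4, L)$.

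The remaining point is to verify that every smoothly embedded oriented surface in $\mathbb{B}^4$ bounded by $L$ represents the same relative class as $\Sigma'$. This is a direct computation from the long exact sequence of the pair $(\mathbb{B}^4, L)$: since $\mathbb{B}^4$ is contractible, $H_2(\mathbb{B}^4) = 0$, so the connecting homomorphism $\partial \colon H_2(\mathbb{B}^4, L) \to H_1(L)$ is injective, and any two oriented surfaces with boundary $L$ have the same image $[L] \in H_1(L)$. Combined with the previous paragraph, this yields $g_4(L) \geq g(\Sigma') = g(\Sigma)$, hence $g_4(L) = g(\Sigma)$.

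For the final assertion $g_4(L) = g(L)$, recall that $L$ is fibered with fiber $\Sigma$, and a classical theorem of Neuwirth and Stallings says that the fiber of a fibered link is a minimal-genus Seifert surface, so $g(L) = g(\Sigma)$. Combined with $g_4(L) = g(\Sigma)$ this gives $g_4(L) = g(L)$. There is no serious obstacle in the proof of the corollary itself; the substantive content is carried entirely by Theorems~\ref{RelGiroux} and~\ref{relThom}, and the corollary is a bookkeeping consequence of these together with the vanishing of $H_2(\mathbb{B}^4)$ and the Neuwirth--Stallings fact about fibered links.
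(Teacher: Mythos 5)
Your proof is correct and takes essentially the same route as the paper: combine Theorem~\ref{RelGiroux} (to exhibit the symplectic surface $\Sigma'$ in $\mathbb{B}^4$ of genus $g(\Sigma)$) with Theorem~\ref{relThom} (to conclude $\Sigma'$ is genus-minimizing). The paper's own proof is quite terse and leaves implicit both the fact that $H_2(\mathbb{B}^4)=0$ forces all oriented surfaces bounded by $L$ to represent the same relative class, and the Neuwirth--Stallings fact that the fiber of a fibered link is a minimal-genus Seifert surface (so $g(L)=g(\Sigma)$); you correctly identify and supply both of these small but necessary bookkeeping steps, so your write-up is, if anything, more complete than the original.
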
 

This also gives a sufficient condition for a fibered link to be strongly quasipositive. 

\begin{corollary}[A Criterion for Strong Quasipositivity]\label{sqp}
If $K$ is a fibered knot such that corresponding open book 
has monodromy given by product of positive Dehn twists then $K$ is strongly quasipositive.
\end{corollary}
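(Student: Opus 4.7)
The plan is to deduce strong quasipositivity from tightness of the contact structure supported by $(\Sigma,\phi)$, invoking Hedden's characterization recalled in the introduction (Proposition~2.1 of \cite{H1}): a fibered knot in $\mathbb{S}^3$ is strongly quasipositive if and only if its associated open book supports a tight contact structure on $\mathbb{S}^3$.

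The first step is to show that the contact structure $\xi_{\phi}$ supported by $(\Sigma,\phi)$ is tight. Since $\phi = D_{\gamma_1}\circ\cdots\circ D_{\gamma_k}$ is a product of positive Dehn twists, Theorem~\ref{RelGiroux} (or, less sharply, Giroux's original sufficiency theorem for Stein fillability) produces a Stein filling of $(\mathbb{S}^3,\xi_{\phi})$; indeed, Theorem~\ref{RelGiroux} strengthens this by also producing a symplectic surface in the filling bounded by $K$. By the Gromov--Eliashberg theorem, any Stein fillable contact structure is tight, so $\xi_{\phi}$ is tight. Since $\mathbb{S}^3$ carries a unique tight contact structure up to isotopy, $\xi_{\phi}$ is isotopic to $\xi_{st}$.

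With tightness in hand, Hedden's criterion immediately gives that the binding $K$ of the open book $(\Sigma,\phi)$ is strongly quasipositive, which is the desired conclusion. The only step requiring a little care is the passage from the output of Theorem~\ref{RelGiroux} (a symplectic surface in the standard Stein $\mathbb{B}^4$ bounded by the transverse knot $K$) to the statement that the open book actually supports $\xi_{st}$; this is handled by uniqueness of the tight contact structure on $\mathbb{S}^3$. There is no substantive obstacle: the corollary is essentially a repackaging of Theorem~\ref{RelGiroux} in the language of quasipositivity via Hedden's theorem.
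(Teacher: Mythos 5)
Your proof is correct, but it takes a genuinely different route from the paper's own argument, and the paper itself flags this alternative in a remark immediately following its proof. The paper proves the corollary by combining Theorem~\ref{RelGiroux} with Corollary~\ref{4G3G} to establish $g(K)=g_4(K)$, and then invokes Hedden's Corollary~1.6 (the characterization of strongly quasipositive fibered knots as those with $g=g_4=\tau$). This route showcases the paper's main machinery: the existence of a symplectic surface in $\mathbb{B}^4$ bounded by $K$ together with the relative Symplectic Thom Conjecture are what force the 4-genus to equal the fiber genus. You instead argue purely at the level of contact topology: Stein fillability (which already follows from Giroux's original sufficiency theorem, without needing the symplectic surface or the genus-minimization result) gives tightness, uniqueness of the tight contact structure on $\mathbb{S}^3$ identifies it with $\xi_{st}$, and Hedden's Proposition~2.1 (tight fibered knot $\Leftrightarrow$ strongly quasipositive) finishes. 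Your argument is shorter and uses strictly less input — in particular, it never needs Theorem~\ref{cap}, Theorem~\ref{relThom}, or Corollary~\ref{4G3G} — at the cost of not illustrating the 4-genus consequences that motivate the paper. The paper's remark after the proof acknowledges exactly this: ``So we may have concluded that $K$ is strongly quasipositive without using Theorem 1.1 and Corollary 1.1.'' So your proposal is sound, and it coincides with the alternative the authors themselves point out.
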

This follows from Hedden's characterization of fibered knots that are strongly quasipositive as the fibered knots for which 3-genus and 4-genus are equal (see Corollary 1.6 in \cite{H1}). 

{\bf Acknowledgements:}
 The second author would like to thank Peter Kronheimer for communicating through MathOverflow an outline of a proof of a relative version of symplectic Thom conjecture using Mrowka-Rollin's result. He would also like to thank John Etnyre for going through a draft of this article and making useful suggestions to bring clarity in exposition.

\section{Preliminaries}

In this section, we briefly recall some definitions and results which are used in the proofs presented here.

\begin{definition} Let $M$ be a closed oriented 3-manifold. A contact structure $\xi $ is a plane distribution such that if $\alpha$ is a locally defined 1-form with $ker(\alpha)= \xi$ then $\alpha \wedge d \alpha \neq 0$.
\end{definition}

\begin{definition}
An \textit{open book decomposition} of a 3-manifold $M$ is a pair $(B, \pi) $ where \\
(a). $B $ is a link in $M$, called the binding of the open book. \\
(b). $\pi : M \backslash B \rightarrow \mathbb{S}^1$ is a fibration. For each $\theta \in \mathbb{S}^1 $, $\pi^{-1}(\theta)$ is an interior of a compact surface $\Sigma_{\theta} $ with $\partial \Sigma_{\theta}= B$. \\
 We call  $\Sigma_{\theta} $ a page of the book.
 Let $\phi $ denote the monodromy of the fibration $\pi : M \backslash B \rightarrow \mathbb{S}^1 $. Then we get an abstract open book $(\Sigma, \phi)$.
 \end{definition}

\begin{definition}
A contact structure $\xi $ on $M$ is supported by an open book decomposition $(B, \pi) $ of $M$ if $\xi$ can be isotoped through contact structures so that there is a contact 1-form $ \alpha$ for $\xi $ such that
\begin{enumerate}
\item $d \alpha $ is a positive area form on each page $\Sigma_{\theta} $ the open book.
\item $\alpha >0 $ on $B$. 
\end{enumerate} 

\end{definition}

\begin{theorem}[Giroux, 2002, \cite{Gi}] 
Let $M$ be a closed, oriented $3$-manifold.
\begin{enumerate}
\item Given an open book decomposition of $M$, there is a positive contact structure, unique up to isotopy, supported by the open book.
\item Every contact structure on $M$ is supported by an open book.
\item Two open book structures on $M$ that support isotopic contact structures are related by positive stabilizations.
\end{enumerate}

%Let $(M, \xi)$ be a closed contact 3-manifold. There is a unique oriented contact structure, up to isotopy, supported by an open book and given a contact structure there is an open book decomposition of $(M, \xi) $ supporting it. Whenever two open books support the same contact structure, they are related by positive stabilizations. 
\end{theorem}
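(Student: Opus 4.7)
The statement is the standard correspondence theorem of Giroux, and I would address its three parts in order.

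For (1), the natural approach is the Thurston--Winkelnkemper construction applied to the abstract open book $(\Sigma,\phi)$. First I would pick a 1-form $\beta$ on $\Sigma$ with $d\beta$ a positive area form and a fixed normal form $\beta = (1+s)\,d\theta$ near $\partial\Sigma$ (in collar coordinates). On the mapping torus $T_\phi = (\Sigma\times[0,1])/((x,1)\sim(\phi(x),0))$ one interpolates between $\beta$ and $\phi^*\beta$ using a cutoff, then adds a large multiple of $d\theta$ to enforce the contact condition $\alpha\wedge d\alpha > 0$. Near the binding $B$ one glues in a standard neighborhood with contact form $\alpha = h_1(r)\,d\theta + h_2(r)\,d\varphi$ for a suitable pair $(h_1,h_2)$, and checks directly that $\alpha$ extends smoothly and that $d\alpha$ is positive on each page. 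Uniqueness up to isotopy follows from convexity of the space of supporting forms (a linear interpolation between two supporting forms remains supporting) combined with Gray stability.

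For (2), which is the deep direction, I would construct a \emph{contact cell decomposition} of $M$ adapted to $\xi$: a CW-structure whose 1-skeleton $G$ is Legendrian, whose 2-cells have Legendrian boundary with Thurston--Bennequin invariant $-1$ relative to $G$, and whose 3-cells are Darboux balls. Existence of such a decomposition comes from the Darboux chart normal form plus a careful refinement argument. A contact-thickening of $G$ is a standard contact handlebody $H_1$ bounded by a convex surface $\Sigma_1$; the surface obtained from $\Sigma_1$ by cutting along the dividing set is the page $\Sigma$, realized as a ``ribbon'' of $G$. Convex surface theory applied to $\Sigma_1\subset (M,\xi)$ identifies the complementary region $H_2 = M\setminus H_1$ with a product $\Sigma\times[1/2,1]$ in which $\xi$ is transverse to the $[1/2,1]$-factor. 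These two product pieces assemble into an open book whose binding is $\partial\Sigma$, and the contact form computed in the two models manifestly satisfies the support axioms on pages and binding.

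For (3), given two open books supporting isotopic contact structures, I would first isotope their associated contact cell decompositions into a common refinement by a finite sequence of elementary Legendrian-arc moves (each adding one Legendrian edge of tb $-1$ to the 1-skeleton). The key geometric lemma is that such an elementary refinement corresponds, on the open book side, precisely to attaching a 1-handle to the page and composing the monodromy with a right-handed Dehn twist about the boundary of the co-core---that is, a positive stabilization.

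The main obstacle is clearly part (2): producing a contact cell decomposition, controlling the convex structure on the boundary of its thickened 1-skeleton, and verifying that the two resulting product decompositions fit together to form an open book supporting $\xi$. Parts (1) and (3) are comparatively formal once the cell-decomposition machinery is in place.
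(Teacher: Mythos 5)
The paper does not prove this theorem; it is cited directly to Giroux~\cite{Gi} and invoked as a black box (together with the Legendrian Realization Principle and the Eliashberg--Weinstein handle-attachment theorem) in the proof of Theorem~\ref{RelGiroux}. So there is no in-paper argument to compare against, and your proposal is supplying a proof the authors deliberately omitted. That said, your outline is a correct and reasonably complete summary of the standard argument: part~(1) via the Thurston--Winkelnkemper construction on the mapping torus with a large multiple of $d\theta$ added, plus the model neighborhood of the binding, with uniqueness from convexity of the space of supporting contact forms and Gray stability; part~(2) via contact cell decompositions, a convex thickening of the Legendrian $1$-skeleton, and convex surface theory on the complementary handlebody; part~(3) via common refinement of contact cell decompositions, with each elementary refinement realized as a positive stabilization. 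The one place your sketch is thinner than the actual argument is in part~(2): identifying the complement $H_2 = M\setminus H_1$ with a product $\Sigma\times[1/2,1]$ on which $\xi$ is ``transverse to the interval factor'' requires more than applying convex surface theory to $\Sigma_1$; one must use the tightness of $\xi$ on the $3$-cells (Darboux balls), the $\mathrm{tb}=-1$ condition on the $2$-cell boundaries, and the Legendrian graph structure to see that $H_2$ is itself a standard contact handlebody whose product structure matches up along the dividing set of $\Sigma_1$. As a plan, though, you have the right route, and the level of detail is appropriate: the hard part is exactly where you say it is.
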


Giroux's theorem allows us to deal with open books rather than contact structures to construct symplectic fillings. We next recall a theorem about symplectic handle addition.

\begin{theorem}[Eliashberg, 1990, Weinstein 1991, \cite{E1,W1}] If $(X, \omega)$ is a symplectic 4-manifold with strongly (weakly) convex boundary and $X'$ is obtained from $X$ by attaching a 2-handle to $X$ along a Legendrian knot in $\partial X $ with framing one less than contact framing (also called as Thurston-Bennequin framing), then $\omega $ extends to a symplectic form $\omega' $ on $X'$ in such a way that $X'$ has strongly (weakly) convex boundary.
\end{theorem}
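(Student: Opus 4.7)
The plan is to construct an explicit local symplectic model of a $2$-handle carrying a Liouville vector field $Z$, and to glue this model to $(X,\omega)$ along a standard contact neighbourhood of the Legendrian attaching circle. Both the strong and the weak convexity versions will then follow from the same model, with only a minor variation at the end.

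For the model, I would work on $(\mathbb{R}^4,\omega_0=dx_1\wedge dy_1+dx_2\wedge dy_2)$ with the Liouville vector field
$Z=\sum_{i=1}^{2}\bigl(2x_i\,\partial_{x_i}-y_i\,\partial_{y_i}\bigr)$,
which satisfies $\mathcal{L}_Z\omega_0=\omega_0$ and has a hyperbolic zero at the origin whose unstable manifold $\{y_1=y_2=0\}$ is Lagrangian. Using suitable level sets of $\tfrac{1}{2}(x_1^2+x_2^2)-(y_1^2+y_2^2)$, I would carve out a $4$-ball handle $H\cong D^2\times D^2$ whose boundary splits into an \emph{attaching region} $\partial_-H$, on which $Z$ points inward and which is a standard neighbourhood of the Legendrian knot $\partial D^2\times\{0\}$, and an \emph{outer region} $\partial_+H$, on which $Z$ points outward. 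The core of the handle is then a Lagrangian disc whose boundary is the Legendrian attaching circle.

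Next I would invoke the Legendrian tubular neighbourhood theorem: any Legendrian knot $L\subset\partial X$ has a contact neighbourhood contactomorphic to $\partial_-H$ with its induced contact structure $\ker(\iota_Z\omega_0)$. Extending this contactomorphism inward along the Liouville flow of $X$ yields a symplectomorphism from a collar of $\partial_-H$ in the model to a collar of $L$ in $X$, and gluing $H$ to $X$ along this symplectomorphism produces $(X',\omega')$ with $\omega'$ smooth. A direct computation in the model shows that the framing induced on $\partial D^2\times\{0\}$ by the Lagrangian core disc differs from the contact framing by $-1$, which is why the hypothesis in the statement requires attachment with framing $\mathrm{tb}(L)-1$.

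Finally, I would check convexity of the new boundary. In the strong case, the Liouville fields on $X$ and on $H$ agree on the overlap and patch to a global Liouville field $Z'$ on $X'$ that is outward transverse to $\partial X'=(\partial X\setminus \mathrm{nbd}(L))\cup\partial_+H$, giving strong convexity. In the weakly convex case, one instead checks that $\omega'$ tames a positive contact structure on $\partial X'$; this follows from the model computation on $\partial_+H$ together with the weak taming inherited from $\omega$ on $\partial X\setminus \mathrm{nbd}(L)$. I expect the main obstacle to be the careful identification used in the gluing: one must simultaneously align the contact form, the Liouville flow, and the Lagrangian core along the attaching region so that $\omega'$ extends smoothly. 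This step is exactly what pins the framing correction to $\mathrm{tb}(L)-1$; once that matching is arranged, the remaining verifications reduce to routine patching arguments.
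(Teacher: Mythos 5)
This theorem is stated in the paper's preliminaries as a known result of Eliashberg and Weinstein, with citations to their original papers; the authors do not supply a proof, so there is no in-paper argument to compare yours against. Your sketch does reproduce the standard Weinstein/Eliashberg handle-attachment argument in broad outline: a local Liouville model on $(\mathbb{R}^4,\omega_0)$ with an explicit expanding vector field, carving out a bidisc with convex and concave boundary pieces, a Legendrian neighbourhood theorem to match the attaching region to a collar of the knot, and a framing computation pinning down the $-1$ twist. That is the right skeleton.

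Two points deserve more care. First, with $Z=\sum_i(2x_i\partial_{x_i}-y_i\partial_{y_i})$ the stable manifold of the origin is $\{x_1=x_2=0\}$ and the unstable manifold is $\{y_1=y_2=0\}$; the core $D^2\times\{0\}$ of the handle (whose boundary is the Legendrian attaching circle lying on $\partial_-H$) sits in the \emph{stable} Lagrangian plane, since those are exactly the flow lines that enter through $\partial_-H$ and converge to the critical point rather than escape through $\partial_+H$. Your text attributes the Lagrangian core to the unstable manifold; both planes are Lagrangian so nothing breaks, but the labelling should be fixed before the framing computation, which depends on tracking the contact framing along the correct disc. Second, the weakly convex case is dispatched too quickly. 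When $(X,\omega)$ is only weakly convex there is no global Liouville field near $\partial X$ to extend, so you cannot simply ``patch Liouville fields.'' The standard workaround is to first deform $\omega$ in a collar of $\partial X$ supported near a neighbourhood of $L$ so that it becomes exact with a Liouville primitive matching the model on the attaching region (this uses that the weak-filling condition is open and that the Legendrian neighbourhood can be chosen with a standard symplectic collar), and only then perform the gluing; afterwards one verifies that $\omega'$ still dominates the new contact structure on $\partial X'$. Without this deformation step, the gluing in the weak case does not go through as written.
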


We call a symplectic handle addition as in the above theorem a Legendrian surgery.
Next we state a version of the Legendrian Realization Principle (LeRP) which is useful in realizing curves on the page of open book as Legendrian curves to carry out Legendrian surgery along them.

\begin{proposition} If $\gamma $ is a nonseparating curve on a page of an open book which supports a contact structure $\xi $ then we can isotop the open book slightly so that $\gamma $ is Legendrian and the contact framing agrees with the page framing.
\end{proposition}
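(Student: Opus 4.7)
The plan is to reduce this to Honda's Legendrian Realization Principle (LeRP) for convex surfaces. First I would use the fact that, after a small isotopy of the open book (supporting the same contact structure $\xi$), one can arrange that near a fixed page $\Sigma_\theta$ the contact form is in standard Giroux form and that the doubled page $\Sigma_\theta \cup \Sigma_{\theta+\pi}$ is a convex surface in $(M,\xi)$ whose dividing set is precisely the binding $B = \partial\Sigma$. This normalization is built into Giroux's compatibility condition: a compatible contact form $\alpha$ with $\alpha|_B > 0$ and $d\alpha$ area-forming on pages produces a contact vector field transverse to the pages, which in turn witnesses convexity with dividing set $B$.

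Next I would verify the nonisolating hypothesis of LeRP. Since $\gamma \subset \mathrm{int}(\Sigma_\theta)$ is nonseparating in $\Sigma_\theta$, the complement $\Sigma_\theta \setminus \gamma$ is connected and has nonempty boundary meeting $B$. Viewed inside the doubled page, every component of the complement of $\gamma$ meets the dividing set $B$, so $\gamma$ is nonisolating in the sense required. Honda's LeRP then furnishes an isotopy of the convex surface, supported in a neighborhood of $\Sigma_\theta$ disjoint from $B$, after which $\gamma$ is Legendrian. Because this neighborhood can be chosen to avoid the binding and to respect the $S^1$-family of pages, the ambient isotopy upgrades to an isotopy of the whole open book decomposition.

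Finally, I would compare the two framings. For a Legendrian curve $\gamma$ on a convex surface with dividing set $\Gamma$, the contact framing differs from the surface framing by $-\tfrac{1}{2}|\gamma \cap \Gamma|$ (this is the standard formula expressing $tb(\gamma)$ relative to the surface framing). In our situation $\gamma$ lies in the interior of the page, so $\gamma \cap B = \emptyset$, and therefore the contact framing coincides with the framing induced by $\Sigma_\theta$, which is the page framing.

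The main obstacle is the first step: pinning down that a page of a supporting open book can be made convex with dividing set equal to the binding. This uses Torisu's construction together with a careful choice of contact vector field; once this normalization is in place, the nonisolating check, the application of LeRP, and the framing identity are essentially formal.
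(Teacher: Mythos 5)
The paper does not actually prove this proposition; it records it as a direct consequence of Honda's Legendrian Realization Principle (LeRP), with a pointer to Etnyre's open book lecture notes \cite{Et3} (Lemma 6.8) for the precise version used. So there is no in-paper proof to compare against, only the cited argument.

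Your reconstruction is the standard argument underlying that citation, and it is correct: make the double $\Sigma_\theta \cup \Sigma_{\theta+\pi}$ convex with dividing set equal to the binding $B$ (Torisu's lemma), observe that a nonseparating curve $\gamma$ in $\mathrm{int}(\Sigma_\theta)$ is nonisolating on this closed convex surface because its complement is connected and touches $B$, apply LeRP to realize $\gamma$ as Legendrian after a $C^0$-small convex isotopy, and conclude the framing agreement from Kanda's twisting formula $tw(\gamma,S) = -\tfrac{1}{2}\#(\gamma\cap\Gamma_S) = 0$ since $\gamma$ is disjoint from $B$. The one step that a complete write-up would need to spell out more carefully is the passage from an isotopy of the doubled convex surface to an isotopy of the whole open book decomposition supported away from the binding; you flag this but leave it implicit, as does the paper by deferring to the reference.
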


LeRP can be found in  generality in a paper of Ko Honda (see theorem 3.7 in \cite{KH}). The above version is an easy consequence of LeRP and appears in a paper of Etnyre (see 6.8 in \cite{Et3}). 
We need three more ingredients.

\begin{definition} Recall that $X$ is a Stein filling of contact manifold $(M. \xi) $ if $X$ is a sublevel set of a plurisubharmonic function on a Stein surface and the contact structure induced on $\partial X $ is contactomorphic to $(M, \xi) $.
\end{definition}

\begin{theorem}[Eliashberg, \cite{E2}] There is a unique Stein filling of $(\mathbb{S}^3, \xi_{st})$.
\end{theorem}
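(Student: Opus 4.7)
The plan is to combine a symplectic capping argument with the Gromov--McDuff theory of pseudo-holomorphic spheres. Given any Stein filling $(W, J)$ of $(\mathbb{S}^3, \xi_{st})$, I would first glue $W$ to the complement of a small Darboux ball in $(\mathbb{CP}^2, \omega_{FS})$. This complement has concave boundary contactomorphic to $(\mathbb{S}^3, \xi_{st})$, so the gluing produces a closed symplectic 4-manifold $(X, \Omega)$ containing a symplectic sphere $C$ of self-intersection $+1$, namely a line in the $\mathbb{CP}^2$-cap.

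The second step is to pick an $\Omega$-compatible almost complex structure $J_X$ on $X$ that restricts to the given Stein $J$ on $W$ and makes $C$ holomorphic, and then to study the moduli space of $J_X$-holomorphic spheres in the class $[C]$. By McDuff's structural theorem for closed symplectic $4$-manifolds containing a $J_X$-holomorphic sphere of nonnegative self-intersection, $X$ must be symplectomorphic to $\mathbb{CP}^2$ or to a blow-up of a rational ruled surface, with a pencil of $J_X$-holomorphic spheres through a base point in the cap sweeping out $X$ away from the exceptional set.

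The main obstacle, and the point where the Stein hypothesis on $W$ is crucial, is ruling out exceptional $(-1)$-spheres living inside $W$. Here plurisubharmonicity is the decisive input: the defining psh function of a Stein domain restricts to a subharmonic function on any $J$-holomorphic curve, so the maximum principle prevents any compact $J$-holomorphic curve from sitting inside $W$. Hence every exceptional divisor produced by McDuff's theorem must lie in the cap $\mathbb{CP}^2 \setminus \mathbb{B}^4$; but the cap is already minimal, with intersection form $\langle +1 \rangle$ admitting no symplectic $(-1)$-sphere. Therefore $X \cong \mathbb{CP}^2$, and restricting the pencil of lines to $W$ foliates $W$ by $J$-holomorphic disks with boundary on $\mathbb{S}^3$. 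Analyzing the boundary evaluation of this family should identify $W$ diffeomorphically with $\mathbb{B}^4$, and a Moser-type argument comparing the two symplectic forms (both filling the same contact structure on $\mathbb{S}^3$) upgrades this to the required symplectomorphism with $(\mathbb{B}^4, \omega_{st})$.
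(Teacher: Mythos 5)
The paper does not prove this theorem; it cites it from Eliashberg \cite{E2} and uses it as a black box. So there is no in-paper argument to compare against, and the relevant question is whether your sketch of the Gromov--Eliashberg--McDuff proof is sound.

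Your high-level strategy is indeed the standard one: glue $W$ to the concave cap $\mathbb{CP}^2\setminus\mathbb{B}^4$, observe that the resulting closed manifold contains a symplectic $(+1)$-sphere $C$, invoke McDuff's classification of closed symplectic $4$-manifolds carrying such a sphere, and then rule out exceptional spheres. But the step where you rule them out has a genuine gap. You argue: the maximum principle forbids a compact $J_X$-holomorphic sphere from lying \emph{inside} $W$, ``hence'' every exceptional sphere must lie in the cap, which is then minimal. This is a false dichotomy. The maximum principle only excludes exceptional spheres whose image is entirely contained in the Stein domain $W$; it says nothing about a $J_X$-holomorphic sphere whose image meets both $W$ and the cap, which is perfectly compatible with $J_X$-convexity of $\partial W$ (the piece $E\cap W$ is then a proper holomorphic curve with boundary on $\partial W$, and its psh maximum is attained on that boundary --- no contradiction). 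In fact positivity of intersections with $C$ only forces $[E_i]\cdot[C]=0$ and hence $[E_i]\in H_2(W)$ inside the Mayer--Vietoris splitting $H_2(X)\cong H_2(W)\oplus\mathbb{Z}[C]$; it does not geometrically confine $E_i$ to either piece. Bridging this gap is the actual technical content of the proof: one must either make a more careful choice of $J_X$ (or a neck-stretching/limiting argument along $\partial W$) to push a holomorphic representative of $[E_i]$ entirely into the completion of $W$ before applying the maximum principle, or run a more homological/adjunction argument to exclude $k>0$. As written, the key step is unjustified.

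Two smaller points. First, your final sentence --- upgrading the diffeomorphism $W\cong\mathbb{B}^4$ to a Stein (or symplectic) equivalence with $(\mathbb{B}^4,\omega_{st})$ --- is stated as ``a Moser-type argument,'' but this is not a routine Moser lemma: one must use that the Liouville form is exact on $W$ together with Gray stability to normalize the filling, and this deserves to be spelled out rather than waved away. Second, McDuff's theorem gives more than ``$\mathbb{CP}^2$ or a blow-up of a ruled surface''; in the presence of a $(+1)$-sphere the conclusion is specifically that $(X,C)$ is a blow-up of $(\mathbb{CP}^2,\text{line})$ at points off $C$, and it is that sharper normal form which makes the exceptional-sphere analysis tractable --- worth stating precisely since your whole argument hinges on it.
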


\begin{theorem}
[Eliashberg~\cite{E3}, Etnyre~\cite{Et1}]If $(X, \omega) $ is a compact symplectic manifold with weakly convex boundary then there is a closed symplectic $(X', \omega') $ into which $(X, \omega) $ symplectically embeds.
\end{theorem}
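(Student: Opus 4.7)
The plan is to construct a \emph{concave symplectic cap} $(W, \omega_W)$ with $\partial W \cong -\partial X$ inducing the same contact structure on the common boundary; then $X' = X \cup_\partial W$, after matching the two symplectic forms along a collar, is the desired closed symplectic $4$-manifold.

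First I would invoke Giroux's theorem to fix an open book $(\Sigma, \phi)$ on $\partial X$ supporting the induced contact structure $\xi$. Since positive stabilizations preserve $\xi$ up to isotopy, I may replace $(\Sigma, \phi)$ by a stabilization for which the monodromy factors as a product of positive Dehn twists along nonseparating curves on the page:
\[
\phi = D_{\gamma_1} \circ D_{\gamma_2} \circ \cdots \circ D_{\gamma_n}.
\]
Let $\hat{\Sigma}$ denote the closed surface obtained by capping off each boundary component of $\Sigma$ with a disk.

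The principal step, and the main obstacle, is to build a closed Lefschetz fibration $\pi : Z \to S^2$ whose regular fiber is $\hat\Sigma$ and whose ordered vanishing cycles consist of $\gamma_1, \ldots, \gamma_n$ followed by additional nonseparating curves $\delta_1, \ldots, \delta_m \subset \hat\Sigma$ chosen so that
\[
D_{\gamma_1} \circ \cdots \circ D_{\gamma_n} \circ D_{\delta_1} \circ \cdots \circ D_{\delta_m} = 1 \in \mathrm{MCG}(\hat\Sigma).
\]
Extending a given positive factorization of $\phi$ to a positive factorization of the identity is the technical heart of the argument; in practice one invokes lantern relations and other standard positive identities in the mapping class group after, if necessary, further stabilizing to increase the genus. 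Once the Lefschetz fibration is built, Gompf's theorem equips $Z$ with a symplectic form making all fibers symplectic. The preimage $F = \pi^{-1}(D)$ of a small disk $D \subset S^2$ containing only the critical values associated to $\gamma_1, \ldots, \gamma_n$ is a symplectic filling of the open book $(\Sigma, \phi)$ on $\partial X$, and hence $W = \overline{Z \setminus F}$ is a concave symplectic cap for $(\partial X, \xi)$.

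Finally, Giroux's uniqueness of the contact structure supported by a given open book, combined with the standard symplectic collar neighborhood theorem near a convex (respectively concave) hypersurface, produces a diffeomorphism of a collar of $\partial X$ in $X$ with a collar of $\partial W$ in $W$ under which $\omega$ and $\omega_W$ agree up to rescaling. Gluing along this identification yields the closed symplectic manifold $X'$ with $(X, \omega) \hookrightarrow (X', \omega')$.
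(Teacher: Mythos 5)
Your outline has a fatal error at the very first step. You assert that after positive stabilization one may assume the monodromy $\phi$ of the open book supporting $\xi$ on $\partial X$ is a product of positive Dehn twists. This is equivalent to $\xi$ being Stein fillable (Giroux, Loi--Piergallini), but weak fillability does \emph{not} imply Stein fillability: Eliashberg's contact structures $\xi_n$ ($n\ge 2$) on $T^3$ are weakly fillable yet not even strongly fillable, so if $(X,\omega)$ is a weak filling of one of these your reduction is already impossible. The entire point of the Eliashberg--Etnyre capping argument is to avoid needing a positive factorization of $\phi$. There is also a secondary inconsistency: your Lefschetz fibration $\pi\colon Z\to S^2$ has \emph{closed} fiber $\hat\Sigma$, so $F=\pi^{-1}(D)$ is a Lefschetz fibration over a disk with closed fiber; its boundary is the mapping torus of $D_{\gamma_1}\circ\cdots\circ D_{\gamma_n}$ acting on $\hat\Sigma$ (a closed-surface bundle over $S^1$), not the open book $(\Sigma,\phi)$ with binding $\partial\Sigma$. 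These are different $3$-manifolds, so $W=\overline{Z\setminus F}$ does not have boundary $-\partial X$ and cannot be glued to $X$.

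The route the cited works actually take is different and sidesteps both problems. One first attaches symplectic $2$-handles along the binding components (with framing $0$ relative to the page) to cap off $\partial\Sigma$ with disks at the level of the $3$-manifold; the new boundary is then an honest $\hat\Sigma$-bundle over $S^1$ and $\omega$ extends. The relevant mapping class group fact is about the \emph{closed} surface $\hat\Sigma$, where (using chain relations and the like) \emph{every} mapping class is a product of positive Dehn twists along nonseparating curves, so further Legendrian $2$-handle attachments trivialize the monodromy and one arrives at boundary $\hat\Sigma\times S^1$, which is then capped by a symplectic $\hat\Sigma$-bundle over $D^2$. Finally, because $\partial X$ is only weakly convex, matching the symplectic forms across the gluing is a nontrivial issue you do not address; part of the reason for passing to the fibered boundary is that it allows one to modify $\omega$ near the boundary to control the class $[\omega|_{\partial}]$ and reduce to the strong case before capping.
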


\begin{theorem} 
[Ozsv\'{a}th-Szab\'{o}, 2000, \cite{OS} ]\label{STC} An embedded symplectic surface in a closed, symplectic four-manifold is genus-minimizing in its homology class.
\end{theorem}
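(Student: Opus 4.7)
The plan is to deduce the theorem from Seiberg--Witten gauge theory, combining the adjunction inequality for Seiberg--Witten basic classes with Taubes' theorem identifying the canonical class of a symplectic $4$-manifold as a basic class.

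First I would recall the Seiberg--Witten adjunction inequality: for a closed oriented $4$-manifold $X$ with $b_2^+(X) > 1$, any $\mathrm{Spin}^c$ structure $\mathfrak{s}$ with nontrivial Seiberg--Witten invariant, and any smoothly embedded closed oriented surface $\Sigma' \subset X$ of positive genus with $[\Sigma']^2 \geq 0$, one has
\[
2g(\Sigma') - 2 \;\geq\; [\Sigma']^2 + |\langle c_1(\mathfrak{s}), [\Sigma']\rangle|.
\]
By Taubes' theorem $SW = Gr$, the canonical $\mathrm{Spin}^c$ structure $\mathfrak{s}_\omega$ of a symplectic $(X,\omega)$ with $b_2^+ > 1$ has Seiberg--Witten invariant $\pm 1$, so the canonical class $K_X$ is a basic class.

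Next I would apply the inequality with $\mathfrak{s} = \mathfrak{s}_\omega$ and $\Sigma'$ any smoothly embedded surface homologous to the symplectic surface $\Sigma$, initially assuming $[\Sigma]^2 \geq 0$. After fixing signs using positivity of intersections for symplectic submanifolds, the inequality reads
\[
2g(\Sigma') - 2 \;\geq\; [\Sigma]^2 + \langle K_X, [\Sigma]\rangle.
\]
The symplectic adjunction formula gives the equality $2g(\Sigma) - 2 = [\Sigma]^2 + \langle K_X, [\Sigma]\rangle$ for $\Sigma$ itself, and combining the two yields $g(\Sigma') \geq g(\Sigma)$, which is the desired genus-minimising property.

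The remaining issues, which constitute the actual content of the Ozsv\'ath--Szab\'o theorem, are (i) surfaces with $[\Sigma]^2 < 0$, (ii) surfaces of genus zero with $[\Sigma]^2 \geq 0$ (this includes the classical Thom conjecture), and (iii) the case $b_2^+(X) = 1$. For (i), I would blow up at suitable base points and pass to proper transforms; the blow-up formula $K_{\widetilde X} = K_X + E$ together with the standard behaviour of SW basic classes under blow-up reduces matters to the non-negative self-intersection case. For (iii), the canonical symplectic chamber still carries a nonzero SW invariant by Taubes, so the same argument applies with chamber bookkeeping and wall-crossing corrections. The genuine hard core is (ii): the standard adjunction inequality is then vacuous, and one must establish a sharper inequality via a delicate analysis of the Seiberg--Witten moduli space, exploiting compatibility of solutions with the symplectic structure in a neighbourhood of $\Sigma$ to rule out a lower-genus competitor. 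This last step is where I expect the main difficulty.
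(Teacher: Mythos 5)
The paper does not prove this statement; it is quoted verbatim as the main theorem of Ozsv\'ath--Szab\'o~\cite{OS} and used as a black box (together with the absolute capping theorem of Eliashberg--Etnyre) to deduce the relative version, Theorem~\ref{relThom}. So there is no ``paper's proof'' to compare against.

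That said, your sketch is a fair outline of the actual Ozsv\'ath--Szab\'o strategy: Taubes' nonvanishing $SW_X(\mathfrak{s}_\omega)=\pm 1$ makes $\pm K_X$ a basic class, the symplectic adjunction formula gives equality for $\Sigma$, and the SW adjunction inequality bounds competitors from below. You also correctly identify where the real content lies. Two cautionary notes, since your ``remaining issues'' paragraph slightly understates what is needed. First, the reduction of the negative-square case is not a naive blow-up: blowing up at points of $\Sigma$ and taking proper transforms \emph{decreases} self-intersection further, so it cannot bring $[\Sigma]^2<0$ back to the non-negative regime; Ozsv\'ath--Szab\'o instead prove a \emph{new} adjunction inequality for negative self-intersection, with $2g-2 \ge -[\Sigma]^2 + |\langle c_1(\mathfrak{s}),[\Sigma]\rangle|$, obtained from a relation among SW invariants under internal fiber sum (summing copies of $X$ along $\Sigma$), not merely from the blow-up formula. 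Second, the genus-zero non-negative-square case is indeed subtle but is handled together with the above, after possibly stabilizing by blowing up away from $\Sigma$ to reach $b_2^+>1$ and controlling chambers. Your outline is consistent with the known argument, but as you say yourself, the hard technical core (the refined adjunction inequality) is asserted rather than supplied, so as a self-contained proof it is incomplete. As a description of the cited reference's method, it is essentially accurate.
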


\section{Construction of Relative Symplectic Caps.}

In this section, we construct relative symplectic caps  using which we give a proof of Theorem \ref{cap}.

\begin{definition} Let $X$ be a symplectic 4-manifold with contact type boundary. Let $\Sigma $ be a symplectic surface in $X$ with $\partial \Sigma \subset \partial X$. We say that the \textit{pair $(X, \Sigma)$ can be capped symplectically} if there exists a closed symplectic manifold $Y$ with a closed symplectic surface $S$ such that the pair $(X, \Sigma)$ embeds into $(Y,S) $ symplectically. In this case, we say that $(X, \Sigma) $ \textit{is capped to a pair $(Y, S) $ symplectically}.
 \end{definition}

To prove Theorem 1.1, we need to show that $(X , \Sigma) $ can be capped symplectically. We first prove a useful proposition for gluing open sets from symplectic manifold to get an open symplectic manifold. Our proof of Theorem 1.1 is a refinement of the argument presented in Theorem 4 of \cite{Et2}. While broadly following the same line of argument we need to take care of following two issues.

\begin{enumerate}
\item Firstly, the resulting manifold should have weakly convex boundary so that we can use the symplectic capping theorem of Eliashberg and Etnyre at a later stage.
\item Secondly and most importantly, we must make sure that the symplectic surface $\Sigma $ is extended to a closed symplectic surface in the new symplectic manifold. 
\end{enumerate}

The following Lemma gives the relative version of gluing along a diffeomorphism that respects \emph{contact forms} (not just contact structures).

\begin{lemma}\label{glueeasy} Let $U_i$ be a domain in symplectic manifold $(X_i, \omega_i)$ with the boundary of $U_i$ piecewise smooth, for $i=1,2 $. Let $S_i $ be a smooth piece in $\partial \overline{U}_i $. Let $T_i $ be a smooth submanifold in $X_i $ which contains $S_i$ in its interior.
Assume the following:  
\begin{enumerate}
\item There is a symplectic dilation $v_1$ transverse to $ S_1$ pointing out of $U_1 $. Let $\alpha_1 = i_{v_1} \omega_1$.
\item There is a symplectic dilation $v_2$ transverse to $S_2$ pointing into $U_2 $. Let $\alpha_2 = i_{v_2} \omega_2$.
\item There is a smooth map $\phi : T_1 \rightarrow T_2 $ such that $\phi(S_1)= S_2 $ and $ \phi^{\ast} \alpha_2 = \alpha_1 $.
\end{enumerate} 
Then $ Y_0 = U_1 \cup_{S_1} U_2 $ admits a symplectic structure.
\end{lemma}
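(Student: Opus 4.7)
The plan is to construct compatible symplectic collar neighborhoods of $S_1$ and $S_2$ using the Liouville flows of $v_1$ and $v_2$, and then to glue them via $\phi$ extended trivially in the flow direction. Because the hypothesis $\phi^*\alpha_2 = \alpha_1$ matches the Liouville primitives on $T_1 \to T_2$, this extension will automatically be a symplectomorphism of open neighborhoods of $S_1$ and $S_2$, which endows the pushout $Y_0 = U_1 \cup_{S_1} U_2$ with a smooth structure and a symplectic form.

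In detail, transversality of $v_i$ to $S_i \subset T_i$ ensures that the time-$t$ flow $F_i \colon T_i \times (-\delta, \delta) \to X_i$ of $v_i$ is a diffeomorphism onto an open neighborhood $N_i$ of $S_i$ in $X_i$, for some small $\delta > 0$. The symplectic dilation condition $\mathcal{L}_{v_i}\omega_i = \omega_i$, together with $\alpha_i = i_{v_i}\omega_i$, yields $\mathcal{L}_{v_i}\alpha_i = \alpha_i$ and $i_{v_i}\alpha_i = 0$, from which a direct computation gives $F_i^*\alpha_i = e^t\,\pi_i^*(\alpha_i|_{T_i})$, where $\pi_i$ is the projection to $T_i$. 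Define $\Phi(p, t) := (\phi(p), t)$; using $t \circ \Phi = t$ and $\pi_2 \circ \Phi = \phi \circ \pi_1$, the hypothesis $\phi^*\alpha_2 = \alpha_1$ then gives $\Phi^*\bigl(e^t\,\pi_2^*(\alpha_2|_{T_2})\bigr) = e^t\,\pi_1^*(\alpha_1|_{T_1})$, and taking the exterior derivative shows that $\Psi := F_2 \circ \Phi \circ F_1^{-1}\colon N_1 \to N_2$ is a symplectomorphism restricting on $S_1$ to $\phi|_{S_1}$.

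Because $v_1$ points out of $U_1$, $F_1$ identifies the piece of $U_1 \cap N_1$ near $S_1$ with the half-collar $T_1 \times (-\delta, 0]$, and because $v_2$ points into $U_2$, $F_2$ identifies the corresponding piece of $U_2 \cap N_2$ with $T_2 \times [0, \delta)$. Gluing $U_1$ and $U_2$ via $\Psi$ along these overlapping collars thus produces a smooth manifold $Y_0$ in which a neighborhood of the seam is modeled on $T_1 \times (-\delta, \delta)$; on this overlap both local symplectic forms equal $d\bigl(e^t\,\pi_1^*(\alpha_1|_{T_1})\bigr)$, so they assemble into a single symplectic form on $Y_0$. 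The subtle point I expect to need care is that the naive topological pushout $U_1 \cup_{S_1} U_2$ does not a priori carry a smooth atlas along the seam; it is the Liouville extension $\Psi$ that simultaneously supplies the smooth structure there and forces the symplectic forms to match, while the opposite sign conventions on $v_1$ and $v_2$ are precisely what make the two half-collars attach on complementary sides to form a manifold rather than a branching.
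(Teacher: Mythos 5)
Your proof is correct and follows essentially the same route as the paper's: both use the Liouville flow of the symplectic dilation to build collar neighborhoods modeled on the symplectization (the paper writes $(0,\infty)\times S_i$ with coordinate $s$, you write $T_i\times(-\delta,\delta)$ with $s=e^t$, which is the same model), extend $\phi$ trivially in the flow direction to a symplectomorphism of these collars, and glue $U_1$ and $U_2$ along the resulting overlap. Your version is slightly more explicit about the flow computation $F_i^*\alpha_i = e^t\pi_i^*(\alpha_i|_{T_i})$ and about the role of the extended submanifold $T_i$ in supplying a genuine open collar (and thus the smooth structure on the seam), but this is a matter of added care rather than a different argument.
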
 
   
\begin{proof} By hypothesis, $\alpha_i = i_{v_i}\omega_i$ are the contact forms induced on $S_i $. Consider the symplectization $Symp(S_i, \xi_i)= (0, \infty) \times S_i$, where $\xi_i= ker ( \alpha_i) $, for $i =1,2 $. Then we have a neighborhood $N_1 $ of $ S_1$ in $X_1$ symplectomorphic to a neighborhood $N'_1 $ of $\alpha_1 (S_1) = \{ 1\} \times S_1 $ in $Symp (S_1, \xi_1) $ such that symplectic dilation $v_1$ is taken to a vector field pointing in the $\frac{\partial}{\partial t} $ direction which we denote by $v_1 $ itself. Similarly there is a neighborhood $N_2 $ of $ S_2$ symplectomorphic to a neighborhood $N'_2 $ of $\alpha_2(S_2)= \{ 1\} \times S_2 $ (see Figure~\ref{dilations}).

\begin{figure}[h!]
\centering
\includegraphics[height=80mm, width=100mm]{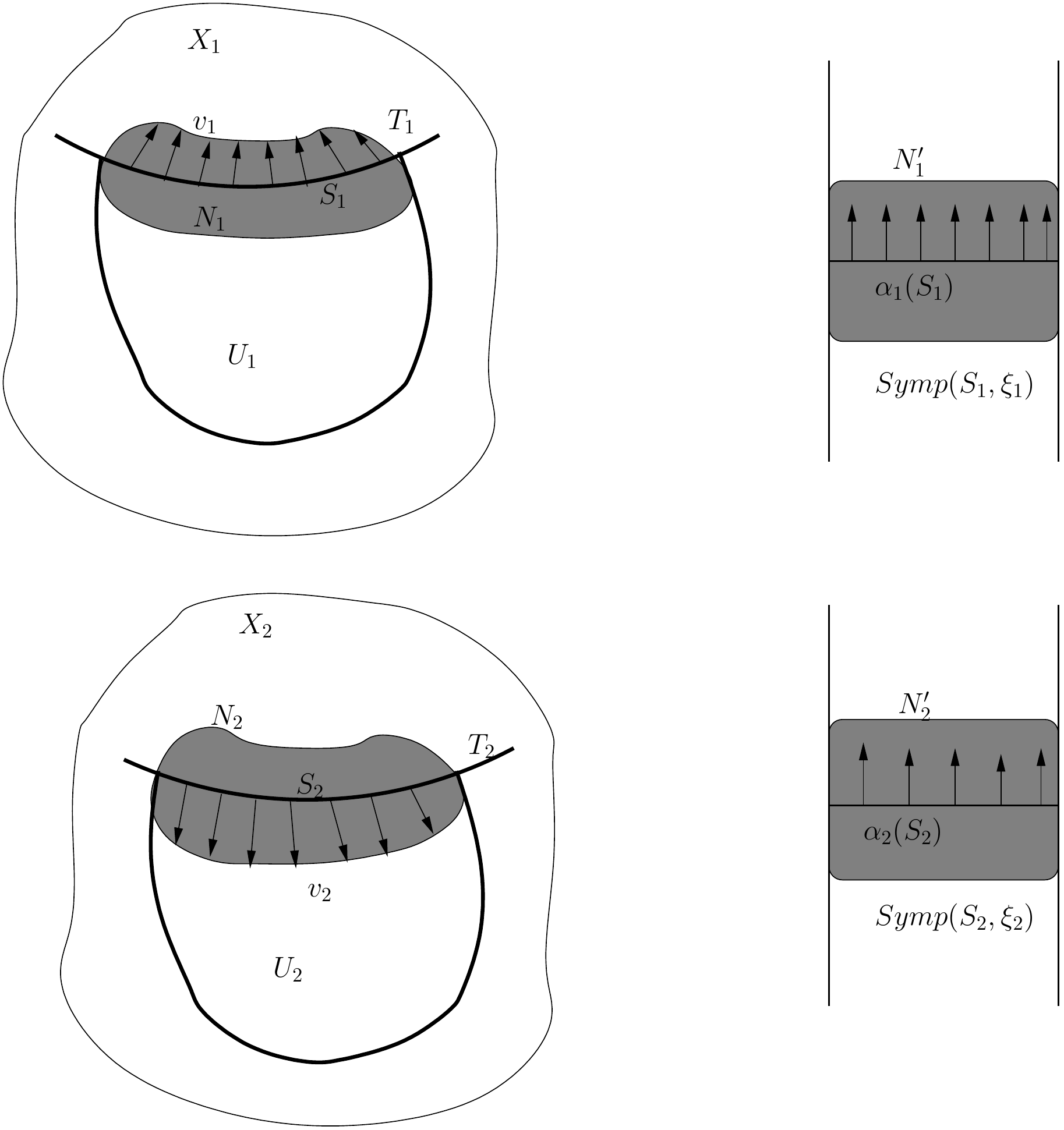}
\caption{Symplectizations And Dilations}\label{dilations}
\end{figure}

%\begin{figure}
 %\includegraphics[height=100mm]{dilations.eps}
 %\caption{Symplectizations And Dilations}
 %\end{figure}

By hypothesis, $ \phi$ is a contactomorphism between $(S_1, ker(\alpha_1)) $ and $(S_2, ker(\alpha_2)) $ such that $\phi^{\ast} \alpha_2 = \alpha_1 $. We can also view $\phi $ as a contactomorphism between $\alpha_1(S_1) $ and $\alpha_2(S_2) $. Now we extend $\phi $ to a symplectomorphism $\phi'= id \times \phi $. An easy check shows that $\phi' $ takes symplectic dilation $v_1 $ to $v_2 $. We regard $\phi' $ as a symplectomorphism between $N_1 $ and $N_2$, by suitably shrinking $N_1$ and $N_2 $ if necessary. 
 Let $U^0_i = U_i \cup N_i $.  We may now use the symplectomorphism constructed above to identify $N_1 \subset U^0_1 $ with $N_2 \subset U^0_2$. Thus we get $Y_0 = U^0_1 \cup_{N_1} U^0_2  $ with a symplectic form $\omega $ such that $\omega \vert_{U^0_i}= \omega_i $, for $i=1,2$. 
\end{proof}

We now turn to the proof of Theorem \ref{cap}. We, first, construct a symplectic cap and then glue it using Lemma~\ref{glueeasy}. 
\begin{proof}[Proof of Theorem 1.1] 
We, first, outline the construction of relative symplectic cap. We embed surface $\Sigma$ in a closed surface $S$. A disc bundle $E$ over $S$ with positive Chern class is a symplectic manifold with (the $0$-section) $S$ a symplectic surface in it. The cap is isotopic to the complement of $E\vert_{\Sigma}$, but with the boundary of the deleted region chosen to ensure convexity and infinite order of contact with the boundary of $E$.

For simplicity, we assume that link $L$ has only one component, i.e., $L$ is a knot in which case we prefer to denote it by $K$. By inductively applying the following construction of the cap for each component of $L$, we can obtain a suitable relative symplectic cap when the link $L$ has more than one component.

\noindent \textbf{Construction of a Relative Symplectic Cap:} \\
We now turn to more details. Let $S$ be a closed Riemann surface into which $\Sigma $ embeds symplectically. Let $J_S $ denote the almost complex structure on $S$. Let $\omega_S $ denote a symplectic form on $S$ compatible with $J_S $. Let $E$ be a complex line bundle over $S$ with $c_1(E)= - k[\omega_S] $ for a fixed $k\in \mathbb{N} $. Let $J= J_S \oplus J_{st} $, where $J_{st}$ denotes the standard almost complex structure on the fiber given by multiplication by $i$.
  
Let $\omega $ denote the symplectic form on the total space given by $\omega|_{(x,v)}= \pi^{\ast}(\omega_S) + \omega_x|_v $, where $\omega_x $ denotes the symplectic form on the fiber at $x \in S$ which is compatible with $J_{st} $. Observe that the almost complex structure $J$ is compatible with $\omega$.
Thus, we have a Hermitian line bundle $(E, \omega,J, g )$. Let $\Sigma ' \subset S$ be a surface which contains $\Sigma $ in its interior and $\partial \Sigma ' $ is parallel to $\partial \Sigma $. As $\Sigma'$ has non-empty boundary, we see that $E|_{\Sigma'} $ is isomorphic to the trivial Hermitian line bundle $(\Sigma' \times \mathbb{C},\omega_{st}, J_{st}, g_{st} ) $. Denote this isomorphism by $\Psi$ from $\Sigma' \times \mathbb{C} $ to $E|_{\Sigma'} $. By the symplectic neighborhood theorem,
there are symplectomorphic neighborhoods $N_1$ of  $\Sigma' \times \{ 0\} $ and $N_2$ of $\Psi (\Sigma'\times \{ 0\})$. Let $\chi : N_1 \rightarrow N_2 $ denote the above symplectomorphism. 

Since $c_1(E)= -k[\omega_S] $ with $k \in \mathbb{N} $, a sufficiently small disc bundle in $E$ is symplectically convex (see Proposition 5 in \cite{Et2}). We denote this disc bundle by $\hat{E}$. We can choose a disc bundle $\hat{E}$ such that $\hat{E}|_{\Sigma'} \subset N_2$. 
 We have a symplectic form $ \omega ' $ on the disc bundle so that the radial vector field along the fibre is a symplectic dilation, (see the proof of Proposition 5 in \cite{Et2}). We notice that the symplectomorphism in a proof
 of symplectic neighborhood theorem (see the proof of Theorem 3.30 in \cite{MS}) is chosen in such a way that it takes the radial vector field to a vector field tangent to the fiber of the disc bundle. We pull back this vector field using $\chi$ onto $N_1 \subset \Sigma' \times \mathbb{C}$.
Therefore, we have a symplectic dilation on $N_1$ tangent to fibers.   

Now, we turn our attention to a sufficiently small trivial disc bundle in $N_1$ endowed with a symplectic dilation transverse to the boundary pointing out of disc bundle.  Let $E'$ denote this disc bundle.  
 
\begin{lemma}\label{pshsurf} Given $\epsilon >0 $, there exists a smooth non-constant subharmonic function $\phi : \Sigma \rightarrow \mathbb{R} $ such that $\phi|_{\partial \Sigma} =1 $ and $|\phi| < \epsilon $ away from a neighborhood of $\partial \Sigma $. 
\end{lemma}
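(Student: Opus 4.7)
The plan is to build $\phi$ on a thin collar of each component of $\partial\Sigma$ as an explicit radial subharmonic function, and then extend by a small constant on the rest of $\Sigma$. First I would fix any complex structure on $\Sigma$ (the one inherited from $S$ is natural, but since subharmonicity in real dimension two is a conformal invariant, the choice is immaterial). By the classification of doubly connected Riemann surfaces, a sufficiently thin annular collar of any boundary component is conformally equivalent to a standard annulus $\{z \in \mathbb{C} : \rho < |z| \le 1\}$ with $\partial\Sigma$ corresponding to $\{|z|=1\}$, and this collar may be taken to lie inside any preassigned neighborhood of $\partial\Sigma$.

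On each such annulus I would put $\phi(z) = f(|z|)$ for a smooth non-decreasing function $f : [0,1] \to \mathbb{R}$ with $f(1)=1$, $f \equiv c$ on $[0, \rho']$ for some $\rho < \rho' < 1$ and some constant $0 < c < \epsilon$, and with $(rf'(r))' \ge 0$ on $[\rho', 1]$. Because a radial function on the annulus has Laplacian $\Delta \phi = (rf'(r))'/r$, the last inequality is exactly the subharmonicity of $\phi$. Extending $\phi$ by the constant $c$ off the collars produces a globally defined function on $\Sigma$; it is smooth (since $f \equiv c$ on a full neighborhood of $\rho'$), is subharmonic on the collars by construction and trivially elsewhere, equals $1$ on $\partial\Sigma$, equals $c < \epsilon$ away from the collars, and is non-constant.

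The remaining elementary step is to exhibit $f$. I would set $g(r) := rf'(r)$ and prescribe $g$ to be a smooth, non-negative, non-decreasing function on $[0,1]$ that vanishes on a neighborhood of $\rho'$ and satisfies $\int_{\rho'}^1 g(s)/s\,ds = 1-c$; then $f(r) := c + \int_{\rho'}^r g(s)/s\,ds$ meets all the requirements. I expect the only potential obstacle to be ensuring genuine $C^\infty$ smoothness at the interface between the radial part of $\phi$ on each collar and the constant extension on the rest of $\Sigma$; this is sidestepped by forcing $f'$ to vanish identically on an open interval around $\rho'$, so that $\phi$ becomes literally constant on an open set around the inner edge of each collar and there is nothing to check across the transition.
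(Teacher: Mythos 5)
Your construction is correct and does prove the lemma as stated, but by a genuinely different route from the paper. The paper simply solves the Dirichlet problem $\Delta\phi = f$ on $\Sigma$ with boundary value $1$, taking $f>0$ to be a source concentrated near $\partial\Sigma$ and small elsewhere; you instead uniformize a thin collar of each boundary circle as a round annulus, lay down an explicit radial profile $f(|z|)$ there (verifying $\Delta\phi = (rf')'/r \ge 0$), and extend by a constant across the inner edge. Your route avoids any appeal to existence theory for elliptic boundary value problems and is completely explicit, which is a real simplification, and the smoothing trick (forcing $f' \equiv 0$ on a neighborhood of $\rho'$) cleanly disposes of the matching issue.

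There is one substantive difference in what the two constructions deliver, worth flagging even though the lemma's wording does not demand it. The Dirichlet solution has $\Delta\phi = f>0$ on all of $\Sigma$, so it is \emph{strictly} subharmonic everywhere, whereas your $\phi$ is constant, hence harmonic with $\Delta\phi \equiv 0$, on all of $\Sigma$ minus the collars. In both places the paper uses this lemma, $\phi$ is fed into $\psi(z_1,z_2)=\phi(z_1)+|z_2|^2$ and the sublevel set $W'=\{\psi\le \mathrm{const}\}$ is asserted to be a compact Stein domain with symplectically convex boundary. Symplectic convexity of $\partial W'$ needs $\psi$ to be strictly plurisubharmonic near $\partial W'$, i.e.\ $\Delta\phi>0$ at the relevant base points; with your $\phi$ the Levi form of $\psi$ degenerates in the $z_1$-direction wherever $\phi$ is locally constant, and $\partial W'$ does pass over that region, so convexity would fail there. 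This is easy to repair within your approach — for instance, take $g$ strictly positive on $(\rho',1]$ and add a small globally strictly subharmonic term to kill the flat region — but it explains why the authors reached for the Dirichlet problem, which yields strict subharmonicity for free.
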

\begin{proof}
We construct the function $\phi$ by solving the Dirichlet problem $\Delta \phi=f$ on $\Sigma$ with boundary value $1$, so that $f$ is a positive function chosen so that $f$ is small away from a neighbourhood of $\partial\Sigma$.
\end{proof}

 By Lemma \ref{pshsurf}, for $\epsilon > 0$ there is a nonconstant smooth subharmonic function $\phi : \Sigma' \rightarrow \mathbb{R} $ with $\phi\vert_{\partial \Sigma'}= \epsilon$.  We have a plurisubharmonic function on $E'$ given by $\psi(z_1, z_2) = \phi(z_1) +| z_2|^2 $. The sublevel set $W'= \{z \vert \psi(z) \leq \epsilon \}$  is a compact Stein domain contained in $ E'$, hence it is symplectically convex. We will make a suitable choice of $\epsilon $ later.
 
  Let $A$ denote an annulus bounded by $\partial \Sigma $ and $\partial \Sigma' $. Let $\mathbb{D}_{r} := \{ z \in \mathbb{C} ; |z| \leq r \} $. Then $E'|_{A}$ is just the product $A \times \mathbb{D}_{t} $ for some $ t >0$. As discussed earlier, there is a symplectic dilation pointing out of $A \times \mathbb{D}_t $ near the boundary. 

%We regard $W'$ as a subset of $E'$. Then $W'$ is symplectically convex. Furthermore as the identification with the trivial bundle is fiber preserving, there is a function $\rho:\Sigma'\to \mathbb{R}$ so that $W'$ is the disc bundle with the fiber of radius $\rho$.
%In particular the intersection of $W'$ with $E\vert_\Sigma$ has boundary transversal to the radial vector.

%By taking a disc bundle with smaller radius, if necessary,  .   
 
 There exists a $\delta > 0 $ depending on $\epsilon $ such that there are tubular neighborhoods $S_1 \subset \partial X$ and $S_2 \subset \partial W' $ of the transverse knot $K = \partial \Sigma$ which are contactomorphic to $(\mathbb{S}^1 \times \mathbb{D}_{\delta}, Ker( d\theta + r^2 d \phi))$, where $\theta $ is a coordinate along $\mathbb{S}^1 $ and $(r, \phi) $ are polar coordinates on $\mathbb{D}_{\delta} $.   

\begin{figure}
\centering
\includegraphics{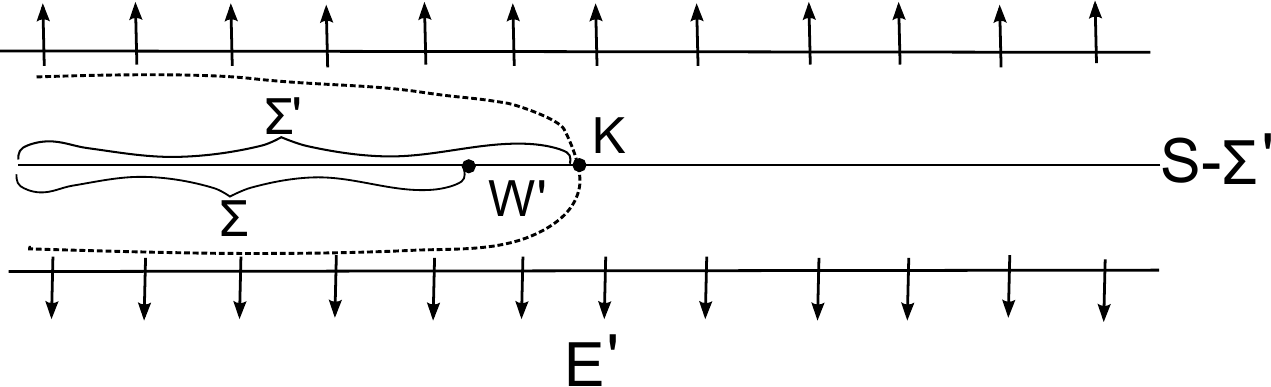}
\caption{The disc bundle with convex boundary.}
\end{figure}

The following Lemma ensures that a symplectic cap has a suitable shape. 
\begin{lemma}
There is a positively valued smooth function  $R: S \rightarrow \mathbb{R}$   such that the following conditions are satisfied:
 \begin{enumerate}
 \item The set $E''= \{v_x \in E' :
 \Vert v_x \Vert \leq R(x)\}$ is a disc bundle contained in $E'$. Notice that the radius of the disc bundle at point $x$ is given by $R(x)$. Further, the radial vector field is a symplectic dilation.
 \item The intersections of $E''$ and $ W' $ with $E'\vert_\Sigma$ coincide. % where $ W = \chi(W') $.
 \item The boundary of $E''= \{v_x \in E' : \Vert v_x \Vert \leq R(x)\}$ intersects $S_2$ with infinite order of contact.
 \end{enumerate}
\end{lemma}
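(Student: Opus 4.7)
The plan is to prescribe $R(x)^2 = \epsilon - \phi(x)$ on an open neighborhood of $\overline{\Sigma}$ in $\Sigma'$, so that $E''$ literally coincides with $W'$ on that region (securing conditions (2) and (3) simultaneously), and then to interpolate smoothly with a small positive constant to guarantee condition (1) globally.

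Since $\phi$ is non-constant subharmonic on $\Sigma'$ with boundary value $\epsilon$, the strong maximum principle gives $\phi < \epsilon$ on the interior of $\Sigma'$; hence on some open neighborhood $U$ of $\overline{\Sigma}$ with $\overline{U} \subset \operatorname{int}(\Sigma')$ there is a $\delta > 0$ with $\phi \leq \epsilon - \delta$, and $R_0(x) := \sqrt{\epsilon - \phi(x)}$ is smooth and strictly positive on $U$. Shrinking $\epsilon$ if necessary, I arrange that $\sqrt{\epsilon}$ is smaller than the fibre radius of $E'$, so that the disc bundle of radius $R_0$ fits inside $E'$. Now pick a smooth cutoff $\beta : S \to [0,1]$ with $\beta \equiv 1$ on a smaller open neighborhood $U_0$ of $\overline{\Sigma}$ and $\operatorname{supp}(\beta) \subset U$, together with a constant $0 < r_0 < \sqrt{\delta}$, and set
\[
R(x) = \beta(x)\, R_0(x) + (1-\beta(x))\, r_0,
\]
extending $R_0$ smoothly as $r_0$ outside $U$ (this extension is irrelevant since $\beta$ vanishes there). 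Then $R$ is smooth, strictly positive on all of $S$, and agrees with $R_0$ on the open set $U_0$.

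Verification is then direct. For condition (1), $R > 0$ makes $E''$ a genuine disc bundle, and the fibrewise radial vector field on $E'$ is a symplectic dilation by the construction of $E'$ as the $\chi$-pullback of the radial Liouville field on $\hat{E}$ (which exists because $c_1(E) = -k[\omega_S]$). For condition (2), on $\overline{\Sigma} \subset U_0$ we have $R = R_0$, so in the trivialization $E'|_{\Sigma'} \hookrightarrow \Sigma' \times \mathbb{C}$ the intersection $E'' \cap E'|_\Sigma$ is exactly $\{(z_1, z_2) : z_1 \in \Sigma,\ |z_2|^2 \leq \epsilon - \phi(z_1)\} = W' \cap E'|_\Sigma$. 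For condition (3), on $U_0$---which contains a neighborhood of $\partial\Sigma$ in $\Sigma'$---the hypersurface $\partial E'' = \{|z_2|^2 = \epsilon - \phi(z_1)\}$ coincides with $\partial W'$ as a subset of $\Sigma' \times \mathbb{C}$, so $\partial E''$ and $\partial W'$ literally agree on an open neighborhood of $S_2$ and therefore have infinite order of contact along $S_2$.

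The main difficulty is that conditions (2) and (3) both rigidly pin down $R$ on overlapping regions---all of $\Sigma$ and a neighborhood of $\partial\Sigma$ in $\Sigma'$, respectively---while (1) demands $R > 0$ everywhere on $S$. The key point is that the maximum principle makes both prescription regions sit inside a single open set $U$ on which $\sqrt{\epsilon - \phi}$ is smooth and positive, so a single cutoff interpolation with a positive constant suffices. Smoothness and the symplectic dilation condition are then automatic from the structure on $E'$, and no additional analytic difficulty enters.
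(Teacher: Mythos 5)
The paper actually offers no written proof of this lemma---the statement is followed only by a remark and then the construction proceeds. So the comparison must be against the \emph{intended use} of the lemma in the paragraphs immediately after.

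Your $R$ does formally satisfy (1), (2) and (3) as literally written. But your plan to make $E''$ coincide with $W'$ on an \emph{open} neighborhood $U_0$ of $\overline{\Sigma}$, and moreover to interpolate downward to $r_0<\sqrt{\delta}\le R_0$ on $U$, breaks the construction the lemma is meant to serve. Over $U_0$ you have $E''=W'$, and over $U\setminus U_0$ you have $R<R_0$, so $E''\subsetneq W'$; in either case $E''-W'=\emptyset$ over a full neighborhood of $\partial\Sigma$ in $\Sigma'$. The very next step in the paper requires $S_2$ (the tubular neighborhood of $K=\partial\Sigma$ inside $\partial W'$) to be a ``smooth piece'' of the boundary of the cap $E''-W'$, endowed with a symplectic dilation transverse to $S_2$ and pointing \emph{into} $E''-W'$; that is how the cap gets glued to $X$ along $S_1\leftrightarrow S_2$. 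With your $R$, the cap is empty precisely where $S_2$ lives, so $S_2\not\subset\partial(E''-W')$ and the gluing has nothing to attach to.

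The geometry the lemma is after is the opposite one. Condition (2) should pin $R=R_0:=\sqrt{\epsilon-\phi}$ \emph{exactly} on $\Sigma$ and only there, and condition (3) is meant to let $\partial E''$ separate from $\partial W'$ over the annulus $A=\Sigma'\setminus\overline{\Sigma}$ in an infinitely flat way---that is, $R>R_0$ on $A\setminus\partial\Sigma$ with $R-R_0$ vanishing to infinite order along $\partial\Sigma$ (e.g.\ by adding a nonnegative bump of the form $e^{-1/d^2}$ in the distance $d$ to $\partial\Sigma$). That way $W'$ sits strictly inside $E''$ over $A$, $E''-W'$ is a nonempty wedge thinning to a smooth cusp along the torus over $\partial\Sigma$, and $\partial W'$ near $K$---hence $S_2$---is genuinely a boundary component of the cap with the Liouville field pointing inward. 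Your reading of ``infinite order of contact'' as allowing literal coincidence on an open set is logically admissible but degenerate, and it is exactly the degeneracy that destroys the rest of the proof. A short fix: replace the downward interpolation $\beta R_0+(1-\beta)r_0$ with $R=R_0+\rho$, where $\rho\ge 0$ vanishes to infinite order on $\overline{\Sigma}$, is strictly positive on $A\setminus\partial\Sigma$ and on $S\setminus\Sigma'$, and is small enough that $R$ stays below the fibre radius of $E'$.

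One smaller slip: since Lemma~3.3 only guarantees $|\phi|<\epsilon$ away from the boundary (it does not give $\phi\ge0$), the bound you want is $R_0<\sqrt{2\epsilon}$, not $\sqrt{\epsilon}$; this does not affect the ability to shrink $\epsilon$, but the constant should be stated correctly.
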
   

\begin{remark}
 We would like to note that the above mentioned function $R$ depends on choice of $\epsilon$. Therefore, the set $ E''$ also depends on the choice of $\epsilon $.
\end{remark}

Observe that  $E''-W'$ is a symplectic manifold with the smooth piece $S_2 $ endowed with a symplectic dilation $v$ transverse to $S_2 $ pointing into $E''-W'$. We call $E''-W' $ a relative symplectic cap for $(X, \Sigma) $. See figure 3.

%\begin{figure}[h!]
%\centering
%\def\svgwidth{\columnwidth}
%\includesvg{RelCap}
%\caption{Relative Symplectic Cap}
%\end{figure} 

\begin{figure}
\centering
\includegraphics{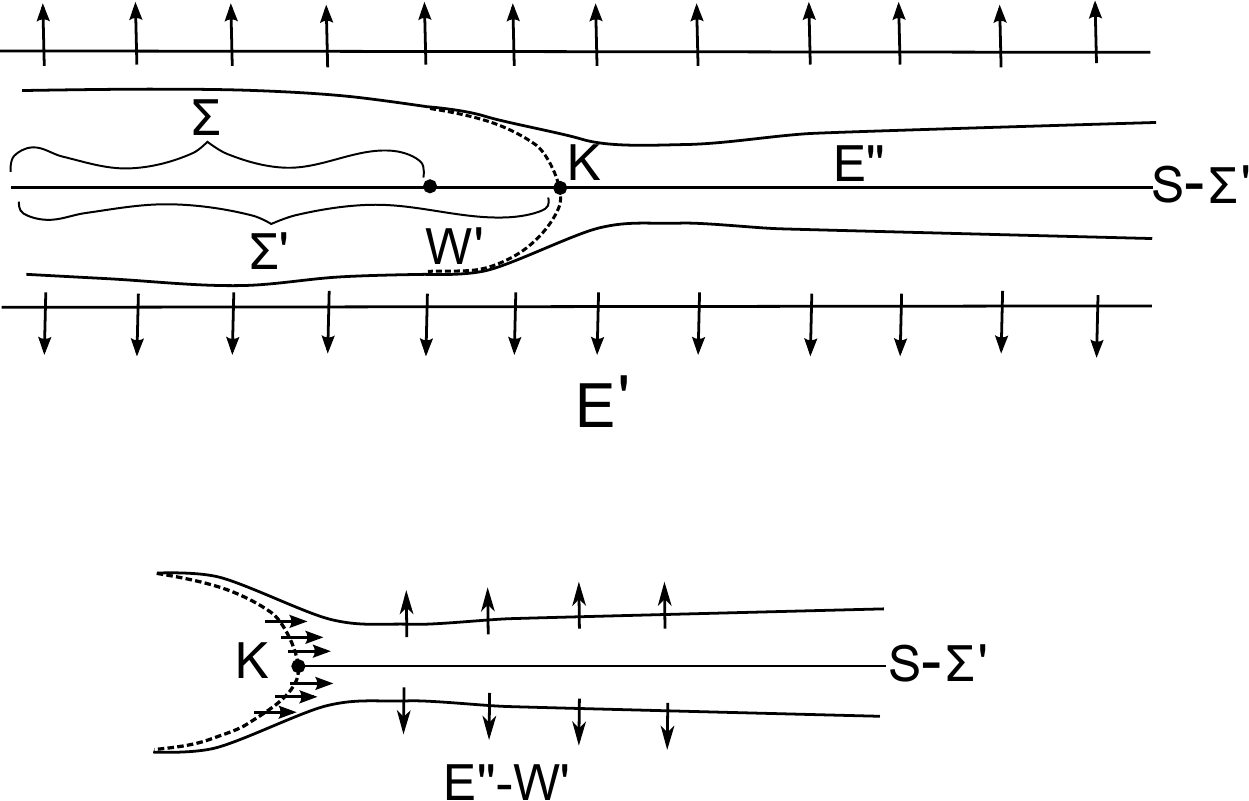}
\caption{The above figure shows a thematic picture of a relative symplectic cap. The dotted curve represents the neighbourhood $S_2$ of the knot $K$. The symplectic dilation on $S_2 $ points into $E'' -W' $. The symplectic dilation given by the radial vector field point out of $E''-W' $.}
\end{figure}

\noindent \textbf{Symplectic dilation on the relative cap:} \\
Now, we show that there is a symplectic dilation on $E''-W' $ which is 
transverse to its boundary and pointing into $E''-W' $ when restricted to $S_2$. 

Let $A$ denote an annulus bounded by $\partial \Sigma $ and $\partial \Sigma' $ such that by Lemma \ref{pshsurf} there exists a subharmonic function $\phi : \Sigma' \to \mathbb{R} $ and $ |\phi(z_1) |=|\psi- |z_2|^2| < \epsilon $ away from $A$. Therefore $|\nabla \psi - R | = O(\epsilon)$, where $R $ is the gradient vector field for the function $|z_2|^2 $ away from the set where $z_2 =0 $. 

Let $\alpha_R = \omega(R, .)$ and $ \alpha = \omega (\nabla \psi, .) $.  Then 
$\alpha_R - \alpha = \omega (R - (\nabla \psi) , .)$. This implies $|\alpha_R - \alpha |= O(\epsilon) $.

Let $\gamma $ be a simple closed curve on the annulus $A$ such that $[\gamma]$ generates $H_1(A, \mathbb{R})$. 
 We see that $ \int_{\gamma} ( \alpha_R - \alpha) = O(\epsilon) $. Let $ c= \int_{\gamma} ( \alpha_R - \alpha) $. Let $\alpha '=\alpha_R - \alpha - c \theta $, where $\theta $ is 1-form representing $PD([\gamma]) $. Then $\alpha' $ is closed and $[\alpha']=0 $, by construction, so $\alpha'$ is exact. Thus, $ \alpha' =df $ for some function $f$ on the annulus $A$. We see that $|df|= O(\epsilon) $ since $|\alpha_R - \alpha |=O(\epsilon) $ and $ c =O(\epsilon) $. Extend $f$ to a function $\tilde{f}$ on $A$ such that outside a neighborhood of $A$,  $\tilde{f} $ is zero.
 
Let $\beta = \alpha_R- c\theta  - d\tilde{f}$. Also notice that $ d \beta = \omega$. Let $X$ be the vector field satisfying $i_X \omega = \beta$. 

Now, take sufficiently small $\epsilon>0$, we see that $X$ is a symplectic dilation transverse to the boundary of $E''-W' $ and pointing into $E''-W' $ when restricted to $ S_2$. Observe that $\beta $ is a contact form and it agrees with $\alpha$ on the disc bundle restricted to $A$.

\begin{remark} Observe that by taking sufficiently small $\epsilon $, the symplectic dilations $X$ and the radial vector field $R$(which is also a symplectic dilation) are sufficiently close. Thus, it became possible to interpolate the two symplectic dilations without destroying transversality on the boundary of relative symplectic cap.  

\end{remark}

\noindent \textbf{Gluing of Relative Symplectic Cap to $(X, \Sigma) $:}\\
From above discussion, $S_1 $ and $S_2 $ are contactomorphic. Let $\phi: S_1 \rightarrow S_2 $ denote this contactomorphism. Then $\phi^{\ast} \alpha_2= \lambda \alpha_1 $, where $\lambda $ is a nonvanishing real valued function. By scaling the symplectic form on $E''-W' $ if necessary, we can assume that $\lambda >1$ on $S_1 $. We extend $\lambda $ to a smooth function $\lambda' $ on $\partial X $ such that away from a neighborhood $N$ of $S_1 $ it is identically equal to constant 1.
 We consider the graph of $\lambda' $ in $Symp(\partial X, ker(\alpha_1)) $ (see figure 3 above). Denote this graph by $\Gamma $. We observe that the graph $\Gamma $ has a contact form given by $\lambda \alpha_1 = \phi^{\ast} \alpha_2 $. Further, away from the neighborhood $N$ of $S_1$, the graph is given by $\alpha_1(\partial X) $. There is a neighborhood $N_1 $ of $S_1$ symplectomorphic to $N'_1 $ in $Symp(\partial X, ker(\alpha_1)) $.  Let $T$ denote the region bounded by $\alpha_1(\partial X)$ and $ N'_1$ in $Symp(\partial X, ker(\alpha_1))$. 
 Now we identify $E''-W' $ with $X  \cup T $ along the graph $\Gamma $ using Lemma~\ref{glueeasy}. We denote the resultant 4-manifold by $X' $.

%\begin{figure}[h!]
%\centering
%\def\svgwidth{\columnwidth}
%\includesvg{Gluing}
%\caption{Relative Symplectic Gluing}
%\end{figure}

\begin{figure}
\centering
\includegraphics[scale=.75]{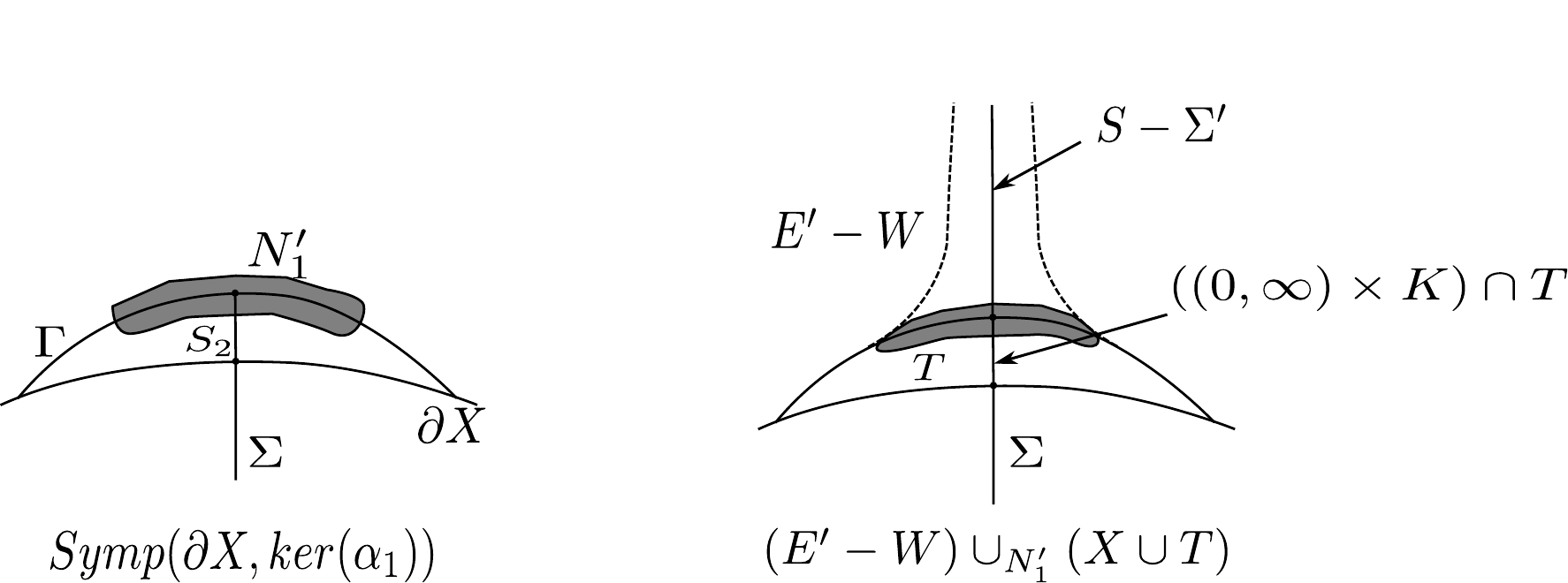}
\caption{Relative Symplectic Gluing}
\end{figure}

\noindent \textbf{Framings on the transverse knot:} \\
We will fix a contactomorphism $\phi_0$ from $(S^1 \times \mathbb{D}_{\delta}, Ker(d\theta + r^2 d\phi))$ to a neighborhood $S_2$ of transverse knot in $\partial W' $. We regard $\phi_0 $ as a base framing on the transverse knot $K$. We can see the effect of change of framing on knot as follows. 

 For $k \in \mathbb{Z}$, let $F_k: S^1 \times \mathbb{D}_{\delta} \rightarrow S^1 \times \mathbb{D}_s$ be a change of framing given by 
 $$(\theta, r, \phi) \mapsto (\theta, f(r), \phi+k \theta) $$
 where $f $ is a smooth function taking positive values. Let $F_k$ be a contactomorphism. Then we have, by definition of contactomorphism,
 $$ Ker (d\theta +r^2 d \phi) = \ Ker (d \theta + f^2(r)d(\phi + k \theta))  $$
A further simplification of the above shows that 
\begin{equation}
f(r) = \sqrt{\frac{r^2}{1-kr^2}}
\end{equation} 

Since, $(S^1 \times \mathbb{D}_{\delta}, Ker(d\theta, r^2d \phi) )$ is contactomorphic to $S_2$ which is a subset of $A \times \mathbb{D}_{\epsilon} $, where $\epsilon$ was chosen to get a symplectic dilation $X$. A tubular neighborhood of $ S_2$ is contained in $A \times \mathbb{D}_{\epsilon} $. Therefore, volume consideration implies that $\delta\leq C (\epsilon) $, for some positive number $C$. In other words, the size of a neighborhood of the transverse knot $K$ is bounded above. We can not have the symplectic dilation $X$ pointing into $E'' - W' $ along $S_2 $ and arbitrarily large volume of contact neighborhood $S_2 $ simultaneously.

From the equation (3.1), we see that the function f exists if $k < 1/C^2(\epsilon) $. 
Also, it  shows that for $k \geq 1/C^2(\epsilon) $, the function $f$ does not exist. This means that framings greater than $1/C^2(\epsilon) $ have too large a volume to be contained inside $A \times \mathbb{D}_{\epsilon}$.

\begin{remark}
On the other hand, negative values of $k$ decrease the volume of corresponding neighborhoods of $K$. In particular, we can carry out our construction of cap for all framings $k < 1/C^2(\epsilon) $.
\end{remark} 

\noindent \textbf{Extending Symplectic Surface $\Sigma $:}\\   
We need to extend the surface $\Sigma $ across $T$. We may assume that the symplectic dilation $v_1$ is tangent to $\Sigma $ in a neighbourhood of the knot $K$. This can be achieved by applying a perturbation to the symplectic surface $\Sigma $ near its boundary $K$, keeping the knot $K$ fixed pointwise.
    
 We notice that $(0, \infty) \times K$ is a symplectic surface in $Symp(\partial X, ker(\alpha_1)) $ since $K$ is a transverse knot in $\partial X $. Let $\Sigma_1 =T \cap ((0, \infty) \times K) $. Observe that the symplectic dilation $\frac{\partial}{\partial t }$ is tangent to the surface $\Sigma_1 $. Therefore, the surface $\Sigma $ in $X$ is extended to a smooth symplectic surface $\Sigma' = \Sigma \cup \Sigma_1 $. 
 
 We also assume that symplectic surface $S-int(\Sigma')$ in the relative symplectic is tangent to the symplectic dilation $v_2 $ near the knot $K$ by perturbing the $S-int(\Sigma')$ near the knot $K$. Notice that gluing the relative symplectic cap to $X$ extends $\Sigma \cup \Sigma_1$ to the (smooth) symplectic surface $S$ in $X'$ since symplectic dilations are matched near the knot $K$.  
\\
\textbf{Convexity Of The Boundary of $X'$ :}\\
 We notice that the $X'$ has convex boundary. Namely, the boundary of $E''-W' $ and $\partial X $ are convex hypersurfaces in $X'$. Moreover, contact forms $\alpha_1 $ and $\alpha_2 $ agree on intersection of these convex hypersurfaces $N'_1 $. Thus, we see that there is a symplectic dilation $v$ on the boundary of resultant manifold $(E''-W')\cup_{N'_1} (X \cup T) $ given by $v_1 $ (radial vector field) on the boundary of $E''-W' $ and by $ v_2$ on the boundary of $X \cup T$. In the neighborhood $N'_1$, where convex hypersurfaces intersect, an easy check shows that $v_1 $ agrees with $v_2$.  

Now, we use Symplectic Capping Theorem of Eliashberg and Etnyre (see Theorem \ref{STC} ) to get a closed 4-manifold. Notice that $(X,\Sigma)$ is capped symplectically.
\end{proof}

\begin{remark}
We emphasize that the choice of framing on the transverse knot $K$ is determined implicitly when we chose the neighbourhood $S_1 $ in $X$ \textit{contactomorphic} to $(\mathbb{S}^1 \times \mathbb{D}_{\delta}, Ker( d\theta + r^2 d \phi))$.
Therefore, the cut-and-paste operation of symplectically capping the pair $(X, \Sigma) $ can not be done with arbitrary choice of framing on the transverse knot $K= \partial \Sigma$. 
\end{remark}

\section{Minimum genus from symplectic caps}
We now prove Corollary~\ref{S3Cap} and Theorem~\ref{relThom}.

\begin{proof}[Proof of Corollary 1.1]: Notice that $\mathbb{B}^4 $ with symplectic structure given by $\omega_{st} $ has convex boundary $\partial \mathbb{B}^4 = \mathbb{S}^3 $. The radial vector field pointing out of $ \mathbb{B}^4$ is a symplectic dilation. By hypothesis, $K $ is a transverse knot in $(\mathbb{S}^3, \xi_{st}) $ and $S$ is a symplectic surface
in $\mathbb{B}^4 $ such that $S \cap \mathbb{S}^3 = K $. By Theorem \ref{cap}, there is pair $(Y, S') $, where $Y$ is a closed symplectic 4-manifold and $S'$ is a closed symplectic surface in it, such that the pair $(\mathbb{B}^4, S) $ can be capped symplectically to a pair $(Y, S') $. 
\end{proof}

\begin{proof}[Proof of Theorem~\ref{relThom}] Once we cap $(X, \Sigma) $ to a pair $(Y,S)$ symplectically. We apply symplectic Thom conjecture (see Theorem \ref{STC} ) to conclude that $\Sigma $ is genus minimizing in its relative homology class.
\end{proof}

\section{Existence of Symplectic Surfaces Bound By A Fibered Knot}

In this section, we start with a construction of a suitable Stein surface in which $K$ bounds a holomorphic curve. We perform Legendrian surgery on the boundary of the Stein surface to get a Stein filling of $(\mathbb{S}^3, \xi_{st}) $. Then, we show that $K$ bounds a symplectic surface in $(\mathbb{B}^4, \omega_{st}) $. We now turn to more details in the following proof. 

\begin{proof}[Proof of Theorem \ref{RelGiroux}]
Let $J$ denote an almost complex structure on a connected, orientable $\Sigma$ with nonempty boundary.  In dimension 2, every almost complex structure is integrable. Consider $\Sigma \times \mathbb{D}^2$ with product complex structure. We ``round the corners" to obtain contact boundary by following procedure.
We construct an exhausting plurisubharmonic function on $\Sigma \times \mathbb{D}^2$ as follows.

\begin{lemma} There is a Stein manifold $W_0 \subset \Sigma \times \mathbb{D}^2 $ with $(\partial W_0, \xi) $, where $\xi $ is the induced contact structure on $\partial W_0 $ as boundary of Stein manifold, is suppoterd by the open book $(\Sigma, id) $, where $id$ denotes the monodromy given by the identity map.
\end{lemma}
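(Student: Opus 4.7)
The plan is to take $W_0$ to be a sublevel set of an appropriate strictly plurisubharmonic (psh) function on an enlargement of $\Sigma \times \mathbb{D}^2$ with respect to the product complex structure. On the surface factor, I would use Lemma~\ref{pshsurf} to produce a strictly subharmonic $\phi: \Sigma \to \mathbb{R}$ with $\phi|_{\partial\Sigma} = 0$ as a regular level set and $\phi < 0$ on the interior of $\Sigma$ (for instance, by solving the Dirichlet problem $\Delta\phi = h$ for a suitable positive function $h$). Since $\Sigma$ is Stein, $\phi$ extends subharmonically to a slight enlargement $\tilde\Sigma$. Setting
$$
\psi(x,z) := \phi(x) + |z|^2
$$
on $\tilde\Sigma \times \mathbb{C}$ gives a strictly psh function for which $0$ is a regular value, so
$$
W_0 := \{\psi \leq 0\} = \{(x,z) : |z|^2 \leq -\phi(x)\}
$$
is a compact Stein domain with smooth strictly pseudoconvex boundary $M$. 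This realizes the ``rounding of corners'' of $\Sigma \times \mathbb{D}^2$ along the codimension-$2$ corner $\partial\Sigma \times \partial\mathbb{D}^2$; the model case where $\Sigma = \mathbb{D}^2$ and $\phi(w) = |w|^2 - 1$ recovers $W_0 = \mathbb{B}^4$ and $M = \mathbb{S}^3$.

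To identify the open book $(\Sigma, \mathrm{id})$ on $M$, I would use the projection $\pi: W_0 \to \mathbb{C}$ onto the second factor. The binding is $B := \{z = 0\} \cap M = \partial\Sigma \times \{0\}$, a copy of $\partial\Sigma$. Off $B$, the map $(x, z) \mapsto z/|z|$ defines a fibration $M \setminus B \to \mathbb{S}^1$ whose fiber over $e^{i\theta}$ is the graph $\{(x, \sqrt{-\phi(x)}\, e^{i\theta}) : x \in \Sigma\}$, diffeomorphic to $\Sigma$ via projection to the first factor, with its boundary landing on $B$. The monodromy is the identity because $\mathbb{S}^1$-rotation in the $z$-factor cyclically permutes the pages by a diffeomorphism equal to the identity on the $\Sigma$-factor. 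The induced contact form $\alpha = d^c\psi|_M$ satisfies $d\alpha = dd^c\psi$, a K\"ahler form; hence $d\alpha$ restricts to a positive area form on each page (the pages are complex curves in the Stein manifold), and a short computation in polar coordinates $z = r e^{i\theta}$ shows that $\alpha|_B$ is proportional to $d\theta$ along the angular direction of $\partial \Sigma$, giving $\alpha > 0$ on $B$ with the correct orientation.

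The main technical point is to verify the regularity of $\psi$ and the smoothness of the pages near $B$, where both $\phi$ and $|z|^2$ vanish simultaneously. This reduces to the first-order behavior of $\phi$ near $\partial\Sigma$: choosing $\phi$ so that $\partial\Sigma$ is a regular level set makes $\phi \sim -s$ in collar coordinates $(s, t) \in [0, \epsilon) \times \partial\Sigma$, and the defining equation $r^2 = -\phi \sim s$ for $M$ exhibits $(s, t, \theta)$ as smooth cylindrical coordinates on $M$ near $B$, identifying a neighborhood of $B$ with the standard contact neighborhood $\mathbb{S}^1 \times \mathbb{D}_\delta$ of a transverse knot. With this local model in hand, Giroux's compatibility conditions become essentially tautological and the lemma follows.
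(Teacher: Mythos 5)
Your construction of $W_0$ is the same as the paper's: both take the sublevel set of $\psi(x,z)=\phi(x)+|z|^2$ with $\phi$ supplied by Lemma~\ref{pshsurf} (you normalize $\phi|_{\partial\Sigma}=0$ where the paper uses $\phi|_{\partial\Sigma}=1$; the shift is immaterial). Where you diverge is in how compatibility with the open book $(\Sigma,\mathrm{id})$ is verified. The paper invokes an auxiliary lemma characterizing supporting open books via an isotopy of the contact plane field toward the page tangencies, and uses tightness of $\xi$ (it is Stein fillable) to drop the transversality-near-binding clause, then exhibits the isotopy by rotating the normal of $\xi$ toward $\partial/\partial\theta$. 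You instead check Giroux's defining conditions directly on the explicit fibration $(x,z)\mapsto z/|z|$, which is cleaner and avoids the auxiliary lemma entirely. Either route is acceptable.

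There is, however, a genuine flaw in your justification that $d\alpha$ is a positive area form on the pages: the pages $P_\theta=\{(x,\sqrt{-\phi(x)}\,e^{i\theta}):x\in\mathrm{int}\,\Sigma\}$ are \emph{not} complex curves. The map $x\mapsto\sqrt{-\phi(x)}\,e^{i\theta}$ has constant argument and nonconstant modulus, hence is not holomorphic, so $P_\theta$ is not $J$-invariant and positivity of the K\"ahler form on complex curves does not apply. The correct reason is an elementary computation: for the product K\"ahler form $\omega=\omega_\Sigma\oplus\omega_{\mathbb{C}}$, the tangent space to $P_\theta$ at $(x, r(x)e^{i\theta})$ consists of vectors $(v,\,dr(v)e^{i\theta})$, $v\in T_x\Sigma$, whose $\mathbb{C}$-components all lie on the real line $\mathbb{R}\,e^{i\theta}$; hence $\omega_{\mathbb{C}}$ vanishes on them and $\omega|_{P_\theta}$ pulls back exactly to $\omega_\Sigma$, which is a positive area form. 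With this repair, your verification goes through and the rest of the argument (identification of the binding, monodromy, and the transverse-knot model near $B$) is sound.
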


\begin{proof} We fix a symplectic form $\omega $ compatible with the almost complex structure $J$ induced by the product complex structure on $\Sigma \times \mathbb{D} $. Let $\psi$ be a subharmonic function given by Lemma \ref{pshsurf}. We now notice that for $z=(z_1,z_2) \in \Sigma \times \mathbb{D}^2 $ the function $\Phi(z)=\psi(z_1)+ |z_2|^2 $ is plurisubharmonic, as we can see by considering a chart $U \times V $.
The sublevel set given by $W_0= \{z | \Phi(z) \leq 1 \} $ is then a Stein domain.

To show that the induced contact structure on $\partial W_0 $ is supported by the open book
$(\Sigma, id)$, we recall following lemma %(see Lemma 3.5 in \cite{Et4} ) .

\begin{lemma} The following statements are equivalent.
\begin{enumerate}
\item The contact manifold $(M, \xi) $ is supported by the open book $(B, \pi) $.
\item $(B,\pi)$ is an open book for $M$ and $\xi $ can be isotoped to be arbitrarily close (as oriented plane fields), on compact subsets of the pages, to the tangent planes to the the pages of the open book in such a way that after some point in the isotopy the contact planes are transverse to $B$ and transverse to the pages of the open book in a fixed neighborhood of $B$.
\end{enumerate} 

\end{lemma}

\begin{remark} \label{Cond2} The condition in part (2) of the above lemma involving transversality to the pages can be dispensed with for tight contact structures.
\end{remark}

Now we show that condition (2) of above lemma is satisfied by the open book $(\Sigma, id) $ and the induced contact structure $\xi$ on $\partial W_0 $.

First observe that the induced contact structure $\xi $ is Stein fillable and therefore it is tight. By remark \ref{Cond2}, it is sufficient to give an isotopy of plane fields which takes $\xi$ arbitrarily close, on compact subsets of the pages, to the tangent planes of the pages of the open book $(\Sigma, id)$. 
 
 By the construction of $W_0$, we see that contact structure induced on $\partial W_0$ as the boundary of $W_0$ is given by complex tangency in the tangent plane at every point. Let $(r, \theta) $ denote the polar coordinates on $\mathbb{D}^2 -\{0 \} $.
  Clearly, the radial vector field $\frac{\partial}{\partial r} $ in the disk is orthogonal to $\partial W_0 $ away from the knot $K=\partial \Sigma \times \{ 0\}$. Notice that $\frac{\partial}{\partial \theta}$ is tangent to $\partial W_0 $ and transverse to $\xi_p $. Also the vector field $\frac{\partial}{\partial \theta} $ is orthogonal to tangent planes of each page $\Sigma_{\theta} $. Therefore, we see that $\xi $ can be isotoped arbitrarily close to $T_p \Sigma_{\theta} $ by rotating normal to $\xi_p $ very close to $\frac{\partial}{\partial \theta}|_p $ on compact subsets of the pages. Hence $(\Sigma, id)$ supports the contact structure $\xi $. 
\end{proof}

 Also observe that $K$ bounds a holomorphic curve in $W_0$ given by $\Sigma \times \lbrace0\rbrace$ with $\partial \Sigma =K$. In particular, $K$ bounds a symplectic surface in $W_0 $. Before we attach 2-handles to $W_0 $, we recall following lemma to see the change in the open book due to symplectic handle addition.

\begin{lemma} [see 6.9 in \cite{Et3}]: Let $\gamma$ be a Legendrian knot on a page of the open book $(\Sigma, \phi)$ the contact manifold $(M, \xi)$. If $(M', \xi')$ is obtained by a Legendrian surgery along $\gamma $ then $(M', \xi') $ is supported by $(\Sigma, \phi \circ D_{\gamma}) $.
\end{lemma}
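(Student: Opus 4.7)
My plan is to prove this via a purely local analysis in a neighborhood of the Legendrian curve $\gamma$, since both the surgery and the modification of the monodromy are supported in an arbitrarily small tubular neighborhood of $\gamma$, while everything outside such a neighborhood is unchanged. The first step is to fix a standard model. Since $\gamma$ lies on a page and its contact framing agrees with its page framing (by the version of LeRP already invoked in the paper), a neighborhood $N(\gamma) \subset M$ can be identified with a standard contact solid torus $S^1 \times D^2$ in which $\gamma \cong S^1 \times \{0\}$, the annular neighborhood $A \subset \Sigma$ of $\gamma$ corresponds to $S^1 \times [-\delta,\delta] \times \{0\}$, and the local pages of the open book are obtained by sweeping $A$ through the disk factor in the radial direction. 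Thus the open book $(\Sigma,\phi)$ is completely determined, near $\gamma$, by this product picture, while outside $N(\gamma)$ it is unaltered.

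Next I would perform the Legendrian surgery in this local model. By definition, Legendrian surgery along $\gamma$ attaches a Weinstein $2$-handle to $M \times [0,1]$ along $\gamma \times \{1\}$ with framing one less than the contact framing; since the contact framing coincides with the page framing, the resulting $3$-manifold $M'$ is obtained from $M$ by Dehn surgery along $\gamma$ with framing equal to $(\text{page framing}) - 1$. I would then identify $M'$ topologically with the open book $(\Sigma,\phi\circ D_\gamma)$ as follows: the mapping torus description $M \setminus B \cong (\Sigma \times [0,1])/((x,1)\sim(\phi(x),0))$ can be cut open along a single page $\Sigma_{\theta_0}$ passing through $\gamma$, exhibiting $M$ as the union of a collar $\Sigma \times [0,\epsilon]$ and the mapping torus of $\phi$. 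Cutting out $N(\gamma)$ and regluing with the prescribed framing shift amounts, in the local annular product, to precomposing the identification across $\Sigma_{\theta_0}$ with a full positive Dehn twist along $\gamma$; this is exactly the standard Dehn surgery description of a Dehn twist. The new monodromy is therefore $\phi \circ D_\gamma$, and the binding and pages outside $N(\gamma)$ are unchanged.

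Finally I would verify that the contact structure $\xi'$ produced by the Weinstein handle attachment is supported by the new open book $(\Sigma,\phi\circ D_\gamma)$. Outside $N(\gamma)$ nothing has changed, so the supporting conditions (contact form positive on the binding and $d\alpha$ a positive area form on pages) continue to hold there. Inside $N(\gamma)$ I would exhibit an explicit Weinstein $1$-form for the standard handle model and check directly that it restricts to a positive area form on the modified local pages and is transverse to them near the binding; these are concrete coordinate verifications in the standard contact solid torus. Supporting the contact structure is then a consequence of the characterization recalled earlier (the alternative formulation involving isotoping $\xi$ close to the tangent planes of the pages), combined with tightness of $\xi'$ which follows from Stein fillability of the surgery. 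The main obstacle is the local contact computation in the Weinstein handle, ensuring that the $1$-form extends smoothly across the gluing interface and that $d\alpha$ stays positive on the modified pages near $\gamma$; this is essentially the content of the model calculation in \cite{Et3}.
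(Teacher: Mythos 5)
The paper does not actually prove this lemma; it cites it as item 6.9 in Etnyre's lecture notes \cite{Et3}, so there is no internal proof to compare against. Your proposal is a reasonable outline of the standard argument that does appear in that reference, and the overall structure is sound: a local model near $\gamma$ where the contact framing agrees with the page framing, a topological cut-and-paste showing that $(-1)$-surgery relative to the page framing produces the open book $(\Sigma,\phi\circ D_\gamma)$, and a local check that the Weinstein handle's primitive restricts to an area form on the new pages.

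Two caveats worth flagging. First, the fact that contact framing equals page framing for an already-Legendrian curve on a page of a supporting open book is a consequence of the Giroux form of the contact structure near the page (the contact planes can be isotoped arbitrarily $C^0$-close to the tangent planes of the pages); it is not literally LeRP, which is about isotoping a smooth curve on the page to become Legendrian. The conclusion you need is correct, but the attribution is off. Second, and more substantially, your third step essentially defers the contact-geometric verification back to \cite{Et3}: you assert that one ``checks directly'' that the Weinstein 1-form is adapted to the modified pages and is transverse near the binding, and that the forms match across the gluing region, but you do not carry out these computations. Since this is precisely where the content of the lemma lies --- the topological identification is elementary, whereas showing that the handle attachment produces a contact form that remains adapted to the new pages (including near $\partial\Sigma$, where the binding meets the surgery region if $\gamma$ is close to the boundary) requires the explicit Weinstein/Giroux local model --- the proposal as written is a correct roadmap rather than a complete proof. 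For the purposes of this paper, which cites the result, that is acceptable, but a self-contained version would need to exhibit the adapted contact form on the handle explicitly and interpolate it with the given adapted form on $M\setminus N(\gamma)$.
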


 By LeRP, $ \gamma_1$ is isotopic to a Legendrian curve with contact framing, which we denote by $\gamma_1 $ itself.  Performing Legendrian surgery along $\gamma_1 $ we get the symplectic manifold $W_1 $ with the boundary $\partial W_1 $ as contact manifold supported by the open book $(\Sigma, D_{\gamma_1}) $. Inductively, we perform surgery along curve $\gamma_i $ to get the symplectic manifold $W_i $ with the contact boundary $\partial W_i $ supported by the open book $(\Sigma, D_{\gamma_1} \circ..\circ D_{\gamma_i}) $. Notice that $ W_{i-1} $ embeds symplectically into $W_i $. Thus, $ \Sigma \times \lbrace 0 \rbrace$ is symplectically embedded into $W_i $. Finally we get $W_k $ with contact boundary $\partial W_k $ supported by  $(\Sigma, D_{\gamma_1} \circ D_{\gamma_2}\circ..\circ D_{\gamma_k}) $. Thus we obtain $\mathbb{S}^3 $ as a boundary of $W_k $ and by hypothesis $\xi_{st} $ is supported by $(\Sigma, D_{\gamma_1} \circ D_{\gamma_2}\circ..\circ D_{\gamma_k}) $. Thus, we obtain a symplectic filling of $(\mathbb{S}^3, \xi_{st}) $.

\begin{remark}
Notice that we performed a Legendrian surgery on a Stein surface at each stage, thus result is a Stein surface. The manifolds $W_i $'s are Stein surfaces. There is a Stein embedding of $W_i $ into $W_{i+1} $, for $i =0,1,.., k-1 $.
\end{remark}
 By uniqueness of Stein filling of $(\mathbb{S}^3, \xi_{st}) $ due to Eliashberg, we see that the symplectic filling $W_k $ is equivalent to $\mathbb{B}^4 \subset \mathbb{C}^2$ as a Stein surface. In particular, $W_k $ is symplectomorphic to $(\mathbb{B}^4, \omega_{st}) $.
 Therefore, $K$ bounds a symplectic surface in $(\mathbb{B}^4,\omega_{st})$. 
We denote the image of $\Sigma$ under this symplectomorphism by $\Sigma' $.

\begin{remark} As we have seen earlier, $K$ bounds a holomorphic curve in $W_k $, but it may not bound a holomorphic curve in $\mathbb{B}^4 $. Under an equivalence of Stein surfaces, the image of holomorphic curve is not necessarily a holomorphic curve.
\end{remark} 
  
\end{proof}

\begin{proof}[Proof of Corollary \ref{4G3G}]
By construction of symplectic filling of $(\mathbb{S}^3, \xi) $ in Theorem \ref{RelGiroux}, we notice that the genus of symplectic surface $\Sigma' $ bound the knot $K$ equals the genus of the page of the open book $(\Sigma, \phi) $. Combining Theorem \ref{cap} and \ref{relThom} we see that symplectic surface $\Sigma' $ in $\mathbb{B}^4 $ minimizes genus in its relative homology class. Thus, $g_4(K)=g(\Sigma')$ which further implies $g_4(K)=g(K) $.     
\end{proof}

\section{A Criterion For Strong Quasipositivity }

In this section, we recall briefly notions of quasipositive knots and strongly quasipositive knots.
Then, we give a criterion for fibered knot to be strongly quasipositive.

\begin{definition} A knot is called \textit{quasipositive} if it can be realized as closure of a braid given by product of conjugates of generators. 
\end{definition}

\begin{definition} A knot is called \textit{strongly quasipositive} if it has quasipositive Seifert surface. And \textit{quasipositive Seifert surface} is a surface obtained by taking parallel disks and attaching positive bands to them. 
\end{definition}

For more details we refer to Rudolf's survey article (see \cite{R1}). We also refer to Hedden's article on notions of quasipositivity (see \cite{H1}).  In his article, Hedden characterises fibered knots $K$ in $\mathbb{S}^3 $ for which $ g(K)= g_4(K)$ (see Corollary 1.6 in \cite{H1}). 

\begin{theorem}[Livingston \cite{L1}]If $K$ is stronlgy quasipositive then $g(K)= g_4(K)=\tau(K) $, where $\tau(K) $ is concordance invariant.
\end{theorem}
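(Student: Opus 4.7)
My plan is to establish the equality chain $g(K) = g_4(K) = \tau(K)$ by combining two universal inequalities with a single lower bound coming from the strong quasipositivity hypothesis. The universal inequalities are $|\tau(K)| \leq g_4(K)$, which is the Ozsv\'ath--Szab\'o slice-genus bound for the concordance invariant $\tau$, and $g_4(K) \leq g(K)$, which follows by pushing a Seifert surface into $\mathbb{B}^4$. Given these, it suffices to produce the reverse bound $g(K) \leq \tau(K)$.

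The key input is to exhibit a transverse representative $K_t$ of $K$ in $(\mathbb{S}^3, \xi_{st})$ whose self-linking number satisfies $sl(K_t) = 2g(K) - 1$. This will be extracted from the quasipositive Seifert surface $\Sigma$ furnished by the hypothesis: Rudolph's work on quasipositive surfaces shows that the natural braid closure bounding such a $\Sigma$ is a transverse link with $sl = -\chi(\Sigma)$, and strong quasipositivity guarantees that $\Sigma$ is genus-minimizing, so $-\chi(\Sigma) = 2g(K) - 1$ for a knot. With this geometric input I would then invoke Plamenevskaya's transverse invariant inequality $sl(K_t) \leq 2\tau(K) - 1$, which follows from the non-vanishing of the transverse Heegaard Floer class and its compatibility with the Alexander filtration. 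Substituting the value of $sl(K_t)$ yields $g(K) \leq \tau(K)$, closing the chain.

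The main obstacle is justifying the first step, namely that strong quasipositivity produces a transverse boundary realizing $sl = -\chi$ on a genus-minimizing surface. An alternative route avoids the transverse-knot machinery and instead analyzes the $\tau$-invariant directly under positive band attachments: starting from the unknot (where $\tau = 0$) and adding the positive bands of a strongly quasipositive presentation one at a time, one shows that each band either preserves $\tau$ or raises it by $1$ in parallel with the change in $1 - \chi$, hence the resulting knot has $\tau(K) \geq g(K)$. Either approach reduces the theorem to known behaviour of $\tau$; the rest is the sandwich argument $g(K) \leq \tau(K) \leq g_4(K) \leq g(K)$, which forces equality throughout.
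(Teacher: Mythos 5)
The paper does not prove this statement; it is quoted as a known theorem of Livingston \cite{L1} and used as a black box, so there is no in-paper proof to compare against. Your primary route is correct and is essentially Livingston's own argument. The sandwich $\tau(K) \le g_4(K) \le g(K)$ is standard, and the reverse bound $g(K) \le \tau(K)$ follows from the sharp slice-Bennequin inequality $sl(K_t) \le 2\tau(K)-1$ (Plamenevskaya; Livingston's paper proves the Legendrian analogue $tb(L)+|r(L)| \le 2\tau(K)-1$), applied to the transverse representative coming from the strongly quasipositive braid closure, which has $sl = -\chi(\Sigma)$ where $\Sigma$ is the quasipositive Seifert surface. One small economy: you do not actually need to invoke Rudolph's genus-minimality of $\Sigma$ at this stage. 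Since $\Sigma$ is a connected Seifert surface with one boundary circle, $-\chi(\Sigma)=2g(\Sigma)-1 \ge 2g(K)-1$ already, which suffices to close the chain; the genus-minimality of $\Sigma$ then falls out as a consequence of the resulting equalities rather than serving as an input.

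Your alternative route via tracking $\tau$ under positive band attachments is not adequately justified as written. The claim that each positive band ``either preserves $\tau$ or raises it by $1$ in parallel with the change in $1-\chi$'' is exactly the content that would need a proof; the standard skein inequality $\tau(K_+)-1 \le \tau(K_-)\le \tau(K_+)$ controls $\tau$ under crossing changes, not oriented band moves (which change the number of components), and it does not by itself force the step-by-step matching with $1-\chi$ that your sketch requires. If you want a band-move argument, you would need a cobordism inequality for $\tau$ together with an argument that equality is attained at every step, which is in effect the Bennequin-type bound in disguise. The transverse (or Legendrian) Bennequin argument is the cleaner and standard route.
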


Hedden proves the converse of the above theorem when $K$ is a fibered knot.

\begin{theorem}[Hedden \cite{H1}]If $K$ is fibered knot in $\mathbb{S}^3 $ then $g(K)= g_4(K)= \tau(K)$ if and only if $K$ is strongly quasipositive.
\end{theorem}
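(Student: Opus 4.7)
The forward implication, that strong quasipositivity forces $g(K)=g_4(K)=\tau(K)$, is Livingston's theorem stated just above, so my plan is to establish the converse: assume $K\subset\mathbb{S}^3$ is fibered with $\tau(K)=g(K)$, and produce a strongly quasipositive structure on $K$. The strategy is to route the hypothesis through the Ozsv\'{a}th-Szab\'{o} contact invariant $c(\xi_K)\in\widehat{HF}(-\mathbb{S}^3)$ of the contact structure $\xi_K$ supported by the open book $(\Sigma,\phi)$ of $K$, establishing the chain of implications
\[
\tau(K)=g(K)\ \Longleftrightarrow\ c(\xi_K)\neq 0\ \Longrightarrow\ \xi_K\text{ is tight}\ \Longleftrightarrow\ \xi_K=\xi_{st}\ \Longleftrightarrow\ K\text{ is strongly quasipositive.}
\]

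The first equivalence is the crux. Since $K$ is fibered of Seifert genus $g$, the combined work of Ozsv\'{a}th-Szab\'{o} with the fibered detection converse of Ghiggini-Ni gives $\widehat{HFK}(\mathbb{S}^3,K,g)\cong\mathbb{Z}$. I would identify the generator of this top summand with the image of $c(\xi_K)$ under the natural map from knot Floer homology to the hat Floer homology of $-\mathbb{S}^3$, using the open book description of the contact class. Since $\tau(K)$ is by definition the smallest Alexander grading at which the generator of $\widehat{HF}(\mathbb{S}^3)\cong\mathbb{Z}$ is represented, the equality $\tau(K)=g(K)$ is equivalent to the top-filtered generator surviving into $\widehat{HF}(-\mathbb{S}^3)$, i.e.\ to $c(\xi_K)\neq 0$. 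The implication $c(\xi_K)\neq 0\Rightarrow\xi_K$ tight is Ozsv\'{a}th-Szab\'{o}'s vanishing theorem for the contact invariant of overtwisted structures, after which Eliashberg's uniqueness of $\xi_{st}$ on $\mathbb{S}^3$ forces $\xi_K=\xi_{st}$.

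The last equivalence is Proposition 2.1 of \cite{H1}. The easy direction follows directly from Theorem \ref{RelGiroux} of the present paper: a strongly quasipositive fibered knot admits an open book whose monodromy is a product of positive Dehn twists, hence is Stein fillable and therefore tight. The reverse direction, extracting a positive Dehn twist factorization of the given monodromy from tightness of $\xi_K$ on $\mathbb{S}^3$, is more delicate and rests on Giroux's correspondence together with a destabilization argument comparing the given open book to the standard one, and on identifying a quasipositive Seifert surface from the resulting factorization.

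The main obstacle I anticipate is the first equivalence: matching the contact class with the top generator of $\widehat{HFK}$ for fibered knots is the deep input, needing both Ghiggini-Ni's fibered detection result and the geometric open book description of $c(\xi)$ due to Ozsv\'{a}th-Szab\'{o}. Once that identification is in hand, the remaining implications are essentially organizational, provided one is willing to cite Hedden's Proposition 2.1 to bridge from the contact topology back to strong quasipositivity.
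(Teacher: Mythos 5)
The paper does not prove this theorem: it is quoted verbatim as a citation to Hedden~\cite{H1} and is used as a black box (together with Livingston's theorem) to derive Corollary~\ref{sqp}. There is therefore no in-paper proof to compare your argument against.

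That said, your sketch correctly reproduces the route Hedden himself takes: the forward implication is Livingston, and the converse factors through the contact invariant via the chain $\tau(K)=g(K)\Leftrightarrow c(\xi_K)\neq 0\Rightarrow \xi_K\ \text{tight}\Rightarrow \xi_K=\xi_{st}\Leftrightarrow K\ \text{SQP}$, with the last equivalence being Hedden's Proposition~2.1, the tightness step being Eliashberg's uniqueness, and the vanishing of $c$ for overtwisted structures being Ozsv\'{a}th--Szab\'{o}. Two caveats. First, the central equivalence $\tau(K)=g(K)\Leftrightarrow c(\xi_K)\neq 0$ for fibered knots is itself the main theorem of~\cite{H1}; as you acknowledge, your proposal gestures at it rather than proving it, and the identification of $c(\xi_K)$ with the top Alexander-graded generator requires careful bookkeeping of the duality between $\widehat{HF}(\mathbb{S}^3)$ (where $\tau$ lives) and $\widehat{HF}(-\mathbb{S}^3)$ (where $c(\xi_K)$ lives), including orientation reversal of the open book; your phrase ``surviving into $\widehat{HF}(-\mathbb{S}^3)$'' glosses over this. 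Second, the aside invoking Theorem~\ref{RelGiroux} of the present paper for the easy direction of Proposition~2.1 is logically fine but unnecessary, since Livingston already supplies the forward implication of Hedden's theorem; be wary of the near-circularity of using a corollary of the present paper (which itself appeals to Hedden's theorem) inside a proof of Hedden's theorem, even if the particular implications you invoke do not in fact form a cycle.
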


\begin{proof}[proof of Corollary \ref{sqp}] Now we combine the Theorem \ref{RelGiroux} and Corollary \ref{4G3G} to conclude that if we have a fibered knot with monodromy given by product of positive Dehn twists then $g(K)= g_4(K) $. Now applying a corollary to Hedden's theorem (See 1.6 in \cite{H1}) we see that fibered knot $K$ must be strongly quasipositive. \end{proof}

\begin{remark} We notice that the hypothesis for above criterion for strong quasipositivity is in terms of monodromy of the fibration $\mathbb{S}^3-K$ over $\mathbb{S}^1 $. Since monodromy is product of positive Dehn twists the corresponding open book decomposition $(\mathbb{S}^3, K) $ is Stein fillable and therefore the open book $(\Sigma, \phi) $ must support tight contact structure. By uniqueness of tight contact structure (up to isotopy) on $\mathbb{S}^3$, it must be the standard tight contact structure $\xi_{st} $ on $\mathbb{S}^3 $. And fibered knot $K$ supports the standard tight contact structure $ \xi_{st}$ if and only if $K$ is strongly quasipositive (See proposition 2.1 in \cite{H1} ). So we may have concluded that $K$ is strongly quasipositive without using theorem 1.1 and corollary 1.1 .    
\end{remark}

\begin{remark}In his survey article, Rudolf remarks (See last paragraph in section 3.3.7 in \cite{R1} ) that little is known about systematic construction of nonstrongly quasipositive knots. In the light of above remark, it is useful to know that a fibered knot with monodromy given by a product of positive Dehn twists must be strongly quasipositive.  
\end{remark}

\bibliographystyle{amsplain}
\bibliography{ref}

\providecommand{\bysame}{\leavevmode\hbox to3em{\hrulefill}\thinspace}
\providecommand{\MR}{\relax\ifhmode\unskip\space\fi MR }
% \MRhref is called by the amsart/book/proc definition of \MR.
\providecommand{\MRhref}[2]{%
  \href{http://www.ams.org/mathscinet-getitem?mr=#1}{#2}
}
\providecommand{\href}[2]{#2}
\begin{thebibliography}{10}

\bibitem{B1}
Michel Boileau and Stepan Orevkov, \emph{Quasi-positivit\'e d'une courbe
  analytique dans une boule pseudo-convexe}, C. R. Acad. Sci. Paris S\'er. I
  Math. \textbf{332} (2001), no.~9, 825--830. \MR{1836094 (2002d:32039)}

\bibitem{JB}
Jonathan Bowden, \emph{Two-dimensional foliations on four-manifold},
  Dissertation, LMU, Munich (2010).

\bibitem{E2}
Yakov Eliashberg, \emph{Filling by holomorphic discs and its applications},
  Geometry of low-dimensional manifolds, 2 ({D}urham, 1989), London Math. Soc.
  Lecture Note Ser., vol. 151, Cambridge Univ. Press, Cambridge, 1990,
  pp.~45--67. \MR{1171908 (93g:53060)}

\bibitem{E1}
\bysame, \emph{Topological characterization of {S}tein manifolds of dimension
  {$>2$}}, Internat. J. Math. \textbf{1} (1990), no.~1, 29--46. \MR{1044658
  (91k:32012)}

\bibitem{E3}
\bysame, \emph{A few remarks about symplectic filling}, Geom. Topol. \textbf{8}
  (2004), 277--293. \MR{2023279 (2005a:57022b)}

\bibitem{Et3}
John Etnyre, \emph{Contact geometry in low dimensional topology}, Low
  dimensional topology, IAS/Park City Math. Ser., vol.~15, Amer. Math. Soc.,
  Providence, RI, 2009, pp.~229--264. \MR{2503497 (2011a:57048)}

\bibitem{Et2}
John~B. Etnyre, \emph{Symplectic convexity in low-dimensional topology},
  Topology Appl. \textbf{88} (1998), no.~1-2, 3--25, Symplectic, contact and
  low-dimensional topology (Athens, GA, 1996). \MR{1634561 (99j:57014)}

\bibitem{Et1}
\bysame, \emph{On symplectic fillings}, Algebr. Geom. Topol. \textbf{4} (2004),
  73--80. \MR{2023278 (2005a:57022a)}

\bibitem{Gay}
David~T. Gay, \emph{Symplectic 2-handles and transverse links}, Trans. Amer.
  Math. Soc. \textbf{354} (2002), no.~3, 1027--1047 (electronic). \MR{1867371
  (2002k:57063)}

\bibitem{Gi}
Emmanuel Giroux, \emph{G\'eom\'etrie de contact: de la dimension trois vers les
  dimensions sup\'erieures}, Proceedings of the {I}nternational {C}ongress of
  {M}athematicians, {V}ol. {II} ({B}eijing, 2002) (Beijing), Higher Ed. Press,
  2002, pp.~405--414. \MR{1957051 (2004c:53144)}

\bibitem{H1}
Matthew Hedden, \emph{Notions of positivity and the {O}zsv\'ath-{S}zab\'o
  concordance invariant}, J. Knot Theory Ramifications \textbf{19} (2010),
  no.~5, 617--629. \MR{2646650 (2011j:57020)}

\bibitem{KH}
Ko~Honda, \emph{Factoring nonrotative {$T^2\times I$} layers. {E}rratum: ``{O}n
  the classification of tight contact structures. {I}'' [{G}eom. {T}opol. {\bf
  4} (2000), 309--368; mr1786111]}, Geom. Topol. \textbf{5} (2001), 925--938.
  \MR{2435298 (2009k:53226)}

\bibitem{L1}
Charles Livingston, \emph{Computations of the {O}zsv\'ath-{S}zab\'o knot
  concordance invariant}, Geom. Topol. \textbf{8} (2004), 735--742
  (electronic). \MR{2057779 (2005d:57019)}

\bibitem{MS}
Dusa McDuff and Dietmar Salamon, \emph{Introduction to symplectic topology},
  second ed., Oxford Mathematical Monographs, The Clarendon Press Oxford
  University Press, New York, 1998. \MR{1698616 (2000g:53098)}

\bibitem{MR}
Tomasz Mrowka and Yann Rollin, \emph{Legendrian knots and monopoles}, Algebr.
  Geom. Topol. \textbf{6} (2006), 1--69. \MR{2199446 (2007e:57026)}

\bibitem{OS}
Peter Ozsv{\'a}th and Zolt{\'a}n Szab{\'o}, \emph{The symplectic {T}hom
  conjecture}, Ann. of Math. (2) \textbf{151} (2000), no.~1, 93--124.
  \MR{1745017 (2001a:57049)}

\bibitem{P1}
Olga Plamenevskaya, \emph{Bounds for the {T}hurston-{B}ennequin number from
  {F}loer homology}, Algebr. Geom. Topol. \textbf{4} (2004), 399--406.
  \MR{2077671 (2005d:57039)}

\bibitem{R2}
Lee Rudolph, \emph{Algebraic functions and closed braids}, Topology \textbf{22}
  (1983), no.~2, 191--202. \MR{683760 (84e:57012)}

\bibitem{R1}
\bysame, \emph{Knot theory of complex plane curves}, Handbook of knot theory,
  Elsevier B. V., Amsterdam, 2005, pp.~349--427. \MR{2179266 (2007b:57016)}

\bibitem{W1}
Alan Weinstein, \emph{Contact surgery and symplectic handlebodies}, Hokkaido
  Math. J. \textbf{20} (1991), no.~2, 241--251. \MR{1114405 (92g:53028)}

\end{thebibliography}
\end{document}